\definecolor{darkblue}{rgb}{0,0,0.4} 
\providecommand\@dotsep{5}
\def\listtodoname{List of Todos}
\def\listoftodos{\@starttoc{tdo}\listtodoname}
\numberwithin{equation}{section}
\newtheorem{thm}{Theorem}
\newtheorem{theorem}[thm]{Theorem}
\newtheorem{lem}{Lemma}[section]
\newtheorem{proposition}[lem]{Proposition}
\theoremstyle{definition}     
\newtheorem{question}[lem]{Question}
\newtheorem{definition}[lem]{Definition}
\theoremstyle{remark}     
\newtheorem{remark}[lem]{Remark}
\newtheorem{example}[lem]{Example}
\numberwithin{figure}{section}
\numberwithin{table}{section}
\newcommand{\Section}[1]{Section~\ref{sec:#1}}
\newcommand{\Theorem}[1]{Theorem~\ref{thm:#1}}
\newcommand{\Definition}[1]{Definition~\ref{def:#1}}
\newcommand{\Remark}[1]{Remark~\ref{rem:#1}}
\newcommand{\Proposition}[1]{Proposition~\ref{prop:#1}}
\newcommand{\Example}[1]{Example~\ref{exam:#1}}
\newcommand{\Diagram}[2][{}]{Diagram#1~(\ref{eq:#2})}
\newcommand{\Z}{\mathbb{Z}}
\newcommand{\F}{\mathbb{F}}
\newcommand{\mc}{\mathcal}
\newcommand{\mb}{\mathbb}
\newcommand{\mf}{\mathfrak}
\newcommand{\wt}{\widetilde}
\newcommand{\del}{\partial}
\renewcommand{\emptyset}{\varnothing}
\newcommand{\card}[1]{\left\vert{#1}\right\vert}
\newcommand{\Set}[2]{\{#1\mid #2\}}
\newcommand{\si}{\sigma}
\newcommand{\from}{\colon}
\newcommand{\into}{\hookrightarrow}
\newcommand{\isomorphicto}{\stackrel{\cong}{\longrightarrow}}
\renewcommand{\th}{^{\text{th}}}
\DeclareMathOperator{\Ob}{Ob}
\DeclareMathOperator{\Id}{Id}
\DeclareMathOperator{\Sq}{Sq}
\DeclareMathOperator{\Hom}{Hom}
\newcommand{\Kh}{\mathit{Kh}}
\newcommand{\KA}[1]{\mathbb{H}^1}
\newcommand{\rKh}{\widetilde{\Kh}}
\newcommand{\KhCx}{\mc{C}_{\mathit{Kh}}}
\newcommand{\rKhCx}{\widetilde{\mathcal{C}}_{\mathit{Kh}}}
\newcommand{\Realize}[2][{}]{|#2|_{#1}}
\newcommand{\CRealize}[2][{}]{\|#2\|_{#1}}
\newcommand{\op}{\mathrm{op}}
\newcommand{\gr}{\mathrm{gr}}
\newcommand{\KhSpace}{\mathcal{X}_\mathit{Kh}}
\newcommand{\rKhSpace}{\widetilde{\mathcal{X}}_\mathit{Kh}}
\newcommand{\CubeCat}[1]{\underline{2}^{#1}}
\newcommand{\Permu}[1]{{\Pi}_{#1}}
\newcommand{\permu}[1]{{\mathcal{S}}_{#1}}
\newcommand{\co}{\colon}
\newcommand{\bdy}{\partial}
\newcommand{\ZZ}{\mathbb{Z}}
\DeclareMathOperator{\Span}{span}
\newcommand{\Dual}[1]{(#1)^*}
\newcommand{\cellC}[1][\bullet]{\wt{C}^{\mathit{cell}}_{#1}}
\newcommand{\BurnsideCat}{\mathscr{B}}
\newcommand{\CCat}[1]{\CubeCat{#1}}
\newcommand{\cmorph}[2]{\varphi_{#1,#2}}
\newcommand{\AbelianGroups}{\mathsf{Ab}}
\newcommand{\AbFunc}{\mathcal{A}}
\newcommand{\thic}[1]{\widehat{#1}}
\newcommand{\thicf}[2][{}]{\widehat{#2}_{#1}}
\newcommand{\Spectra}{\mathscr{S}}
\newcommand{\Complexes}{\mathsf{Kom}}
\newcommand{\Total}{\mathsf{Tot}}
\newcommand{\CW}{\mathsf{CW}}
\newcommand{\CWtoSpec}[1]{(\Sigma^{\infty}{#1})}
\newcommand{\Sets}{\mathsf{Sets}}
\newcommand{\KhFunc}{F_{\Kh}}
\newcommand{\HKKfun}{F_{\mathit{HKK}}}
\DeclareMathOperator{\hocolim}{hocolim}
\newcommand{\Ab}{\mathit{Ab}}
\newcommand{\KhAbFunc}{F_{\Kh,\Ab}}
\begin{document}

 
\title{The cube and the Burnside category}

\author{Tyler Lawson}
\thanks{TL was supported by NSF Grant DMS-1206008.}
\email{\href{mailto:tlawson@math.umn.edu}{tlawson@math.umn.edu}}
\address{Department of Mathematics, University of Minnesota, Minneapolis, MN 55455}

\author{Robert Lipshitz}
\thanks{RL was supported by NSF Grant DMS-1149800.}
\email{\href{mailto:lipshitz@uoregon.edu}{lipshitz@uoregon.edu}}
\address{Department of Mathematics, University of Oregon, Eugene, OR 97403}

\author{Sucharit Sarkar}
\thanks{SS was supported by NSF Grant DMS-1350037.}
\email{\href{mailto:sucharit@math.princeton.edu}{sucharit@math.princeton.edu}}
\address{Department of Mathematics, Princeton University, Princeton, NJ 08544}


\keywords{}

\date{May 1, 2015}

\begin{abstract}
  In this note we present a combinatorial link invariant that
  underlies some recent stable homotopy refinements of Khovanov
  homology of links. The invariant takes the form of a functor between
  two combinatorial 2-categories, modulo a notion of stable
  equivalence. We also develop some general properties of such
  functors.
\end{abstract}

\maketitle


\section{Introduction}\label{sec:intro}

In the last five years, two stable homotopy refinements of the
Khovanov homology of links were
introduced~\cite{RS-khovanov,HKK-Kh-htpy}. In a recent paper, we
showed that these two refinements agree, and in the process gave a
simplified construction of these
invariants~\cite{LLS-khovanov-product}. That paper still involved a
certain amount of topology. In this note, we present a
combinatorial link invariant from which one can extract the Khovanov
homotopy types, with the goal of making aspects of our earlier work
more broadly accessible.

We will study functors from the cube category $\CCat{n}$, the small
category associated to the partially ordered set $\{0,1\}^n$
(\Section{cube}), to the Burnside category $\BurnsideCat$, which is
the 2-category whose objects are finite sets and morphisms are finite
correspondences (\Section{burnside}). The original construction of
Khovanov homology~\cite{Kho-kh-categorification} using Kauffman's
$n$-dimensional cube of resolutions~\cite{Kau-knot-resolutions} of
an $n$-crossing link diagram $K$ can be generalized to construct a
2-functor
\[
F_{\Kh}(K)\from\CCat{n}\to\BurnsideCat.
\]
See \Section{khovanov-functor} for the exact definition, and for some
additional grading shifts that are involved. (Such functors also arise
in other contexts, such as from a simplicial complex with $n$
vertices; see \Example{delta-cx-functor}.)

To any such functor $F\from \CCat{n}\to\BurnsideCat$, one can
associate a chain complex $\Total(F)\in\Complexes$, the
\emph{totalization} of $F$. Indeed, this
construction can be thought of a functor
\[
\Total\from\BurnsideCat^{\CCat{n}}\to\Complexes.
\]
The totalization of the functor $F_{\Kh}$ recovers the dual of the
Khovanov complex of $K$:
\[
\Dual{\KhCx(K)}=\Total(F_{\Kh}).
\]
(The dual only appears to maintain consistency with earlier
conventions~\cite{RS-khovanov}.)

Given a functor $F\from\CCat{n}\to\BurnsideCat$ and a sufficiently
large $\ell\in\Z$, one can construct a based CW complex
$\CRealize[\ell]{F}\in\CW$. The dependence on $\ell$ is simple:
$\CRealize[{\ell+1}]{F}$ is merely the reduced suspension
$\Sigma\CRealize[\ell]{F}$. The reduced cellular chain complex of
$\CRealize[\ell]{F}$ is the earlier chain complex up to a
shift:
\[
\cellC(\CRealize[\ell]{F})=\Sigma^\ell\Total(F).
\]
Similarly, to any natural transformation $\eta\from F\to F'$ between
two such 2-functors $F,F'\from\CCat{n}\to\BurnsideCat$, 
one can associate a pointed cellular map
$\CRealize[\ell]{F}\to\CRealize[\ell]{F'}$ that induces the map
$\Sigma^\ell\Total(\eta)$ on the reduced cellular chain complexes.
Unfortunately, the definitions of these spaces and these maps depend
on certain choices, and therefore the construction is not strictly
functorial.  We may eliminate these choices if we are willing to work
with spectra. Indeed, if $\Spectra$ is any reasonable category of
spectra,
then there is a canonical functor
\[
\Realize{\cdot}\from\BurnsideCat^{\CCat{n}}\to\Spectra
\]
so that for any $F\from\CCat{n}\to\BurnsideCat$, $\Realize{F}$ is
isomorphic to $\Sigma^{-\ell}\CWtoSpec{\CRealize[\ell]{F}}$, the
$\ell\th$ desuspension of the suspension spectrum of
$\CRealize[\ell]{F}$. Up to a grading shift, the Khovanov homotopy
type associated to the $n$-crossing link diagram $K$ is
\[
\KhSpace(K)=\Realize{F_{\Kh}},
\]
and therefore, its cohomology ${H}^\bullet(\KhSpace(K))$ is
isomorphic to the Khovanov homology $\Kh(K)=H_\bullet(\KhCx(K))$.  

In this note, we only sketch the construction of $\CRealize[\ell]{F}$ and
only hint at the construction of $\Realize{F}$ (\Section{spaces}).  We
will focus on functors from cubes to the Burnside category and
define an equivalence relation on such functors, generated by
the following two relations:
\begin{enumerate}[leftmargin=*]
\item If $\eta\from F\to F'$ is a natural transformation between two
  functors $F,F'\from\CCat{n}\to\BurnsideCat$ and the induced map
  $\Total(\eta)\from\Total(F)\to\Total(F')$ is a chain homotopy
  equivalence, then $F$ is stably equivalent to $F'$.
\item If $\iota\from\CCat{n}\into\CCat{N}$ is a face inclusion, and
  $F\from\CCat{n}\to\BurnsideCat$, then there is an induced functor
  $F_{\iota}\from\CCat{N}\to\BurnsideCat$, induced by
  $F_{\iota}\circ\iota=F$ and for any
  $v\in\CCat{N}\setminus\iota(\CCat{n})$,
  $F_{\iota}(v)=\emptyset$. The functor $F_{\iota}$ is stably
  equivalent to $F$ (up to a grading shift).
\end{enumerate}
See \Definition{equiv-functors} for the precise version. This notion
of \emph{stable equivalence} ensures that if $F,F'$ are stably equivalent
functors, then $\Realize{F}$ and $\Realize{F'}$ are homotopy
equivalent spectra; or equivalently, $\CRealize[\ell]{F}$ and
$\CRealize[\ell]{F'}$ are stably homotopy equivalent CW complexes.

The main result of this paper is that the Khovanov functor $\KhFunc$
is a link invariant. Namely, if $K$ and $K'$ are isotopic link
diagrams, then the Khovanov functors $F_{\Kh}(K)$ and $F_{\Kh}(K')$
are stably equivalent (\Theorem{main}, \Section{khovanov-functor}).

\thinspace
\noindent\emph{Acknowledgments}. We thank the referee for comments on
a draft of this paper. The second author also thanks the organizers of
the conference ``Categorification in Algebra, Geometry and Physics''
for creating a stimulating environment in which to discuss these
results.

\section{The cube}\label{sec:cube}

The one-dimensional cube is $\CCat{}=\{0,1\}$. It can be viewed as a
partially ordered set by declaring $1>0$. It can also be viewed as a
category with a single non-identity morphism from $1$ to
$0$. There is a notion of grading, where we declare the grading of
$v\in\CCat{}$ to be $\card{v}=v$.

The $n$-dimensional cube is the Cartesian product
$\CCat{n}=\{0,1\}^n$. It has an induced partial order, where
\[
(u_1,\dots,u_n)\geq (v_1,\dots,v_n) \text{ if and only if }\forall
i(u_i\geq v_i),
\]
and an induced categorical structure: the morphism set
$\Hom_{\CCat{n}}(u,v)$ has a single element if $u\geq v$, and is empty
otherwise. 
For $u\geq v$, we will write $\cmorph{u}{v}$
to denote the unique morphism in $\Hom_{\CCat{n}}(u,v)$. Finally, there is an
induced grading, which is simply the $L^1$-norm:
\[
\card{v}=\sum_i v_i.
\]
For convenience, we will write $u\geq_k v$ if $u\geq v$ and
$\card{u}-\card{v}=k$; and we will sometimes write
$\xymatrix@C=1.5ex{u\ar@{-{*}}[r]&v}$ if $u\geq_1 v$.

We will need the following \emph{sign assignment} function.
\begin{definition}\label{def:sign-assign}
  Given $u=(u_1,\dots,u_n)\geq_1 v=(v_1,\dots, v_n)$, let $k$ be
  the unique element in $\{1,\dots,n\}$ satisfying $u_k>v_k$, and define
\[
s_{u,v}=\sum_{i=1}^{k-1}u_i \pmod 2.
\]
\end{definition}

\section{The Burnside category}\label{sec:burnside}

For us, the Burnside category $\BurnsideCat$ is the 2-category of
finite sets, finite correspondences, and bijections of
correspondences.  The objects $\Ob(\BurnsideCat)$ are finite
sets; for any two objects $A,B$, the morphisms
$\Hom_{\BurnsideCat}(A,B)$ are the finite correspondences (or spans)
from $A$ to $B$, that is, triples $(X,s,t)$ where $X$ is a finite set,
and $s\from X\to A$ and $t\from X\to B$ are set maps, called the 
\emph{source map} and the \emph{target map}, respectively.
We usually denote such correspondences by
diagrams \( \vcenter{\xymatrix@R=1ex{&X\ar[dl]_-s\ar[dr]^-t\\A&&B}}
\); and we often drop $s$ and $t$ from the notation if they are
irrelevant to the discussion. The identity morphism
$\Id_A\in\Hom_{\BurnsideCat}(A,A)$ is the correspondence \(
\vcenter{\xymatrix@R=1ex{&A\ar[dl]_-{\Id}\ar[dr]^-{\Id}\\A&&A}}
\). Composition is given by fiber product.  That is, for $X$ in
$\Hom_{\BurnsideCat}(A,B)$ and $Y$ in $\Hom_{\BurnsideCat}(B,C)$, the
composition $Y\circ X$ is defined to be the fiber product $Y\times_B
X=\Set{(y,x)\in Y\times X}{s(y)=t(x)}$ in $\Hom_{\BurnsideCat}(A,C)$:
\[
\xymatrix@R=1ex{&&Y\times_B X\ar[dl]\ar[dr]&\\&X\ar[dl]\ar[dr]&&Y\ar[dl]\ar[dr]\\A&&B&&C.}
\]
For morphisms $X,Y$ in $\Hom_{\BurnsideCat}(A,B)$, define the
$2$-morphisms from $X$ to $Y$ to be the bijections $X\isomorphicto Y$
so that the following diagram commutes:
\[
\xymatrix@R=1ex{&X\ar[dddl]\ar[dddr]\ar[dd]\\ \\&Y\ar[dl]\ar[dr]\\A&&B.}
\]

The Burnside category $\BurnsideCat$ is self-dual, in the sense of
being isomorphic to its own opposite $\BurnsideCat^\op$. The
isomorphism preserves objects, and sends
$(X,s,t)\in\Hom_{\BurnsideCat}(A,B)$ to
$(X,t,s)\in\Hom_{\BurnsideCat^{\op}}(B,A)$.  The category of finite
sets is essentially a subcategory of $\BurnsideCat$ since we can view
any set map $f\from A\to B$ as the correspondence \(
\vcenter{\xymatrix@R=1ex{&A\ar[dl]_-{\Id}\ar[dr]^-f\\A&&B}} \). (This
statement is not literally true because composition is only preserved
up to $2$-isomorphism.)

We will need the following functor from $\BurnsideCat$ to the category
of Abelian groups $\AbelianGroups$.
\begin{definition}\label{def:abfunc}
  Define the functor $\AbFunc\from\BurnsideCat\to\AbelianGroups$ as
  follows. For $A\in\Ob(\BurnsideCat)$, define $\AbFunc(A)=\Z\langle
  A\rangle$, the free Abelian group with basis $A$. For a
  correspondence $(X,s,t)\in\Hom_{\BurnsideCat}(A,B)$, define the map
  $\AbFunc(X)\from\AbFunc(A)\to\AbFunc(B)$ by defining it on the basis
  elements $a\in A$ as
  \[
  \AbFunc(X)(a)=\sum_{b\in B} \card{s^{-1}(a)\cap t^{-1}(b)}b.
  \]
\end{definition}
If, in $\BurnsideCat$, we replace correspondences with isomorphism
classes of correspondences, we obtain an ordinary category. The
functor $\AbFunc$ identifies this category with a subcategory of
$\AbelianGroups$, whose objects are the finitely generated free
Abelian groups with a chosen basis and whose morphisms are represented
by matrices of nonnegative integers.

\section{Functors from the cube to \texorpdfstring{$\BurnsideCat$}{B}}\label{sec:functor}

The central objects that we will study are pairs $(F,r)$, where $F$ is
a functor $\CCat{n}\to\BurnsideCat$ and $r\in\ZZ$ is an integer; we
will use the notation $\Sigma^r F$ to denote such a pair, and call it
a \emph{stable functor}. When it is unlikely to cause confusion we will
suppress $\Sigma^r$ from the notation, and refer to a stable functor $F$.

The cube category $\CCat{n}$ is a strict category, while the Burnside
category $\BurnsideCat$ is a weak $2$-category, so we owe some
explanation regarding what we mean by a functor
$F\from\CCat{n}\to\BurnsideCat$. We will treat $\CCat{n}$ as a
$2$-category with no non-identity $2$-morphisms, and the functor $F$
will be a strictly unitary lax $2$-functor. (For general definitions,
see \cite[Definition~4.1 and Remark~4.2]{Benabou-other-bicategories}
which calls such functors \emph{strictly unitary homomorphisms}.)

\begin{definition}\label{def:cube2burnside-functor}
  A \emph{strictly unitary lax $2$-functor} $F$ from the cube category
  $\CCat{n}$ to the Burnside category $\BurnsideCat$ consists of the
  following data:
  \begin{enumerate}[leftmargin=*]
  \item a finite set $F(v)\in\Ob(\BurnsideCat)$ for every
    $v\in\{0,1\}^n$,
  \item a finite correspondence
    $F(\cmorph{u}{v})\in\Hom_{\BurnsideCat}(F(u),F(v))$ for every
    $u>v$, and
  \item a $2$-isomorphism $F_{u,v,w}\from
    F(\cmorph{v}{w})\times_{F(v)}F(\cmorph{u}{v})\to F(\cmorph{u}{w})$
    for every $u>v>w$,
  \end{enumerate}
  so that for all $u>v>w>z$, the following diagram commutes:
  \begin{equation}\label{eq:lax-functor-pentagon}
    \vcenter{\xymatrix@C=12ex{
        F(\cmorph{w}{z})\times_{F(w)}F(\cmorph{v}{w})\times_{F(v)}
        F(\cmorph{u}{v})\ar[r]^-{\Id\times F_{u,v,w}}\ar[d]_-{F_{v,w,z}\times\Id}&F(\cmorph{w}{z})\times_{F(w)}F(\cmorph{u}{w})\ar[d]^-{F_{u,w,z}}\\
        F(\cmorph{v}{z})\times_{F(v)}F(\cmorph{u}{v})\ar[r]_-{F_{u,v,z}}&F(\cmorph{u}{z}).
      }}
  \end{equation}
  (Here, $F(\cmorph{w}{z})\times_{F(w)}F(\cmorph{v}{w})\times_{F(v)}
  F(\cmorph{u}{v})$ denotes either
  $\big(F(\cmorph{w}{z})\times_{F(w)}F(\cmorph{v}{w})\big)\times_{F(v)}
  F(\cmorph{u}{v})$ or
  $F(\cmorph{w}{z})\times_{F(w)}\big(F(\cmorph{v}{w})\times_{F(v)}
  F(\cmorph{u}{v})\big)$, which are not the same but are canonically
  identified.)
\end{definition}

We will typically refer to strictly unitary lax $2$-functors simply as
\emph{$2$-functors} or even just \emph{functors}.

\begin{example}\label{exam:delta-cx-functor}
  Let $X$ be a finite $\Delta$-complex (as in
  \cite[Section~2.1]{Hatcher-top-book}) such that every $k$-simplex
  contains $(k+1)$ distinct vertices. (For example, $X$ could be a
  finite simplicial complex.) For each $k$, let $X(k+1)$ denote the
  set of $k$-simplices of $X$, and let $X(0)=\emptyset$.  Let
  $\{p_1,\dots,p_n\}=X(1)$ be the vertices of $X$.  To this we
  associate the stable functor $\Sigma^{-1}F^X$ where \(
  F^X\from\CCat{n}\to\BurnsideCat \) is given by
  \begin{align*}
    F^X(v)&=\Set{\Delta\in X(\card{v})}{\forall
      i\big((v_i=1)\iff(p_i\in\Delta)\big)} &\text{for every
      $v\in\{0,1\}^n$,}\\
    F^X(\cmorph{u}{v})&=\Set{(\Delta_v,\Delta_u)\in
      F^X(v)\times F^X(u)}{\Delta_v\subset\Delta_u}
    &\text{for every $u>v$ in $\{0,1\}^n$.}
  \end{align*}
  The source and target maps for the correspondences are the two
  projection maps. For every $u>v>w$, the $2$-isomorphisms $F^X_{u,v,w}$
  are uniquely determined, and the uniqueness forces
  \Diagram{lax-functor-pentagon} to commute.
\end{example}

Like the name, the data for a strictly unitary lax $2$-functor
$\CCat{n}\to\BurnsideCat$ might seem unwieldy, but fortunately there
is a smaller formulation. Consider the following three pieces of data:
\begin{enumerate}[label=(D-\arabic*),ref=(D-\arabic*)]
\item\label{data:set} for every vertex $v\in\{0,1\}^n$ of the cube,
  a finite set $F(v)\in\Ob(\BurnsideCat)$,
\item\label{data:correspondence} for every edge
  $\xymatrix@C=1.5ex{u\ar@{-{*}}[r]&v}$ of the cube, a finite
  correspondence $F(\cmorph{u}{v})\in\Hom_{\BurnsideCat}(F(u),F(v))$, and
\item\label{data:bijection} for every two-dimensional face
  $\vcenter{\xymatrix@R=0ex@C=1ex{&v\ar@{-{*}}[dr]\\u\ar@{-{*}}[ur]\ar@{-{*}}[dr]&&w\\&v'\ar@{-{*}}[ur]}}$
  of the cube, a $2$-morphism \[F_{u,v,v',w}\from
  F(\cmorph{v}{w})\times_{F(v)} F(\cmorph{u}{v})\to
  F(\cmorph{v'}{w})\times_{F(v')} F(\cmorph{u}{v'}),\]
\end{enumerate}
satisfying the following two conditions:
\begin{enumerate}[label=(C-\arabic*),ref=(C-\arabic*)]
\item\label{condition:matching} for every two-dimensional face
  $\vcenter{\xymatrix@R=0ex@C=1ex{&v\ar@{-{*}}[dr]\\u\ar@{-{*}}[ur]\ar@{-{*}}[dr]&&w\\&v'\ar@{-{*}}[ur]}}$,
  $F_{u,v',v,w}=F_{u,v,v',w}^{-1}$, and
\item\label{condition:hexagon} for every three-dimensional face
  $\vcenter{\xymatrix@R=0.8ex@C=1.5ex{&v\ar@{-{*}}[r]\ar@{-{*}}[dr]&w''\ar@{-{*}}[dr]\\u\ar@{-{*}}[ur]\ar@{-{*}}[r]\ar@{-{*}}[dr]&v'\ar@{-{*}}[ur]\ar@{-{*}}[dr]&w'\ar@{-{*}}[r]&z\\&v''\ar@{-{*}}[r]\ar@{-{*}}[ur]&w\ar@{-{*}}[ur]}}$,
  the following commutes:
  \[
  \xymatrix@C=10ex{
    F(\cmorph{w''}{z})\times_{F(w'')}F(\cmorph{v}{w''})\times_{F(v)}F(\cmorph{u}{v})\ar[r]^-{F_{v,w'',w',z}\times\Id}\ar[d]^-{\Id\times
      F_{u,v,v',w''}}&
    F(\cmorph{w'}{z})\times_{F(w')}F(\cmorph{v}{w'})\times_{F(v)}F(\cmorph{u}{v})\ar[d]_-{\Id\times
      F_{u,v,v'',w'}} \\
    F(\cmorph{w''}{z})\times_{F(w'')}F(\cmorph{v'}{w''})\times_{F(v')}F(\cmorph{u}{v'})\ar[d]^-{F_{v',w'',w,z}\times\Id}&F(\cmorph{w'}{z})\times_{F(w')}F(\cmorph{v''}{w'})\times_{F(v'')}F(\cmorph{u}{v''})\ar[d]_-{F_{v'',w',w,z}\times\Id}\\
    F(\cmorph{w}{z})\times_{F(w)}F(\cmorph{v'}{w})\times_{F(v')}F(\cmorph{u}{v'})\ar[r]^-{\Id\times
      F_{u,v',v'',w}}&F(\cmorph{w}{z})\times_{F(w)}F(\cmorph{v''}{w})\times_{F(v'')}F(\cmorph{u}{v''}).
  }
  \]
\end{enumerate}

A strictly unitary lax $2$-functor $F$ produces
data~\ref{data:set}--\ref{data:bijection} satisfying
conditions~\ref{condition:matching}--\ref{condition:hexagon}, by
simply declaring that $F_{u,v,v',w}=F_{u,v',w}^{-1} \circ F_{u,v,w}$.
Conversely:

\begin{proposition}\label{prop:redefine}
  Assume we are given data~\ref{data:set}--\ref{data:bijection}
  satisfying
  conditions~\ref{condition:matching}--\ref{condition:hexagon}. Then
  up to natural isomorphism, there is exactly one strictly unitary $2$-functor
  $F\from\CCat{n}\to\BurnsideCat$ that produces it.
\end{proposition}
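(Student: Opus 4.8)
The plan is to reconstruct the $2$-functor $F$ explicitly from the data and verify that the pentagon axiom \DiagramCont{lax-functor-pentagon} holds, then check uniqueness up to natural isomorphism. First I would set up the comparison maps: for any $u > v$ in $\CCat{n}$, the value $F(\cmorph{u}{v})$ is already prescribed by \ref{data:correspondence} when $\card{u}-\card{v}=1$, but for larger gaps we must build it by composing along a chosen saturated chain $u = x_0 > x_1 > \dots > x_k = v$ and using iterated fiber products. The structure $2$-isomorphisms $F_{u,v,w}$ for $u>v>w$ must then be assembled from the edge-face data $F_{\bullet}$ of \ref{data:bijection}; concretely, $F_{u,v,w}$ should be defined so that passing between any two saturated chains refining $u>w$ is mediated by a composite of the elementary bijections $F_{a,b,b',c}$. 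So the first real step is a \emph{coherence lemma}: any two saturated chains from $u$ to $v$ in the cube are connected by a sequence of ``square moves'' (each swapping an adjacent pair $b > b'$ at a single coordinate to $b'' $), and condition~\ref{condition:hexagon} is exactly the statement that the hexagon relating the four ways of resolving a ``cube move'' commutes, while condition~\ref{condition:matching} says that a square move and its reverse cancel. This is a standard ``Zamolodchikov/hexagon implies well-definedness'' argument: the square moves generate all rearrangements, the hexagon relations (together with the trivial ``far-commutativity'' of moves at disjoint coordinates, which holds automatically since the relevant fiber products are taken over different sets) are a complete set of relations, so the composite bijection between any two refinements of a chain is independent of the path of moves chosen.

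Granting the coherence lemma, I would \emph{define} $F(\cmorph{u}{v})$ (for all $u>v$, not just $\card{u}-\card{v}=1$) as the iterated fiber product along, say, the lexicographically-first saturated chain, and define $F_{u,v,w}$ as the canonical bijection (coming from the coherence lemma) between the composite ``refine $u>v$ then $v>w$'' chain and the ``refine $u>w$'' chain. Then checking the pentagon \DiagramCont{lax-functor-pentagon} for $u>v>w>z$ reduces to the coherence statement for the single chain $u>v>w>z$: all four edges of the pentagon are canonical reassociation/regrouping bijections between iterated fiber products indexed by the \emph{same} underlying saturated refinement of $u > z$, so the pentagon commutes because the coherence lemma says there is at most one such canonical bijection. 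The strict unitarity (identities go to identity correspondences) and the $2$-isomorphism condition that $F_{u,v,w}$ commutes with source and target maps are then immediate from the constructions, since every elementary bijection in the data is required in \ref{data:bijection} to be a $2$-morphism.

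For existence of the natural isomorphism comparing $F$ with any other $2$-functor $F'$ producing the same data, note that $F'(v) = F(v)$ on objects (both equal the prescribed finite set) and $F'(\cmorph{u}{v}) = F(\cmorph{u}{v})$ on edges; on a non-edge morphism $\cmorph{u}{v}$, the value $F'(\cmorph{u}{v})$ is forced up to canonical $2$-isomorphism by repeated application of $F'$'s own structure isomorphisms $F'_{a,b,c}$ along a saturated chain, and the induced-data recipe $F'_{a,b,b',c} = (F'_{a,b',c})^{-1}\circ F'_{a,b,c}$ shows this canonical $2$-isomorphism is built from exactly the same elementary bijections as ours; assembling these into a natural transformation $F \Rightarrow F'$ and checking naturality against each edge morphism is routine. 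Uniqueness of this natural isomorphism follows because its component at each vertex is forced to be the identity (there are no non-identity $2$-morphisms from a finite set to itself that are compatible with the identity correspondence in the needed way), which pins down the component at every morphism.

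I expect the main obstacle to be the coherence lemma --- precisely, verifying that conditions~\ref{condition:matching} and \ref{condition:hexagon} (plus the automatic disjoint-coordinate commutativity) generate \emph{all} relations among square moves, so that the path-independence really holds. This is where the combinatorics of the cube's $2$- and $3$-faces does all the work, and one must be careful that a general rearrangement of a saturated chain in $\{0,1\}^n$ decomposes into moves supported on $2$- and $3$-dimensional faces; once that is in hand, the pentagon and the functoriality checks are formal.
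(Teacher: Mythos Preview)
Your proposal is correct and follows essentially the same route as the paper: choose a maximal chain for each $u>v$, define $F(\cmorph{u}{v})$ as the iterated fiber product along it, define $F_{u,v,w}$ by a sequence of square moves, and deduce well-definedness and the pentagon from a coherence lemma asserting that any two such sequences give the same bijection. The paper handles your ``main obstacle'' by observing that maximal chains from $u$ to $v$ (with $u\geq_k v$) are the vertices of the permutohedron $\Permu{k-1}$, square moves are its edges, and its $2$-faces are exactly the squares (far-commutativity) and hexagons (condition~\ref{condition:hexagon}); simple connectivity of $\Permu{k-1}$ then gives the coherence lemma immediately, which is the clean device you were anticipating needing.
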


\begin{proof}
  This is \cite[Lemma~2.12]{LLS-khovanov-product}, but for
  completeness, we give a proof. For both existence and uniqueness, we
  need the following facts about maximal chains on the cube
  $\CCat{n}$. Fix $u\geq_k v$, and consider maximal chains
  $\xymatrix@C=1.5ex{u=z_0\ar@{-{*}}[r]&\cdots\ar@{-{*}}[r]&z_i\ar@{-{*}}[r]&\cdots\ar@{-{*}}[r]&z_k=v}$. Then:
  \begin{enumerate}[leftmargin=*,label=(m-\arabic*),ref=(m-\arabic*)]
  \item\label{item:max-chain:2d} Any two such maximal chains
    $\mf{m}_1$ and $\mf{m}_2$ can be connected by a sequence of swaps
    across two-dimensional faces, that is, by a sequence of
    replacements of
    chains $\big(\xymatrix@C=1.5ex{\cdots\ar@{-{*}}[r]&z_{i-1}\ar@{-{*}}[r]&z_i\ar@{-{*}}[r]&z_{i+1}\ar@{-{*}}[r]&\cdots}\big)$
    by
    $\big(\xymatrix@C=1.5ex{\cdots\ar@{-{*}}[r]&z_{i-1}\ar@{-{*}}[r]&z'_i\ar@{-{*}}[r]&z_{i+1}\ar@{-{*}}[r]&\cdots}\big)$.
  \item\label{item:max-chain:3d} Any two such sequences $\mf{s}_1$ and
    $\mf{s}_2$ connecting any two such maximal chains $\mf{m}_1$ and
    $\mf{m}_2$ can be related by a sequence of moves of the following
    three types:
    \begin{enumerate}[leftmargin=0in,label=(\alph*),ref=(\alph*)]
    \item Replacing a sequence of the form
      $\{\ldots,\mf{m}_{\ell},\mf{m}'_{\ell},\mf{m}_{\ell},\ldots\}$
      with the sequence $\{\ldots,\mf{m}_{\ell},\ldots\}$.
    \item Exchanging a sequence of one of the following two forms for the other:
    \[
    \xymatrix@C=1.1ex@R=0ex{
      &&&&\vdots&&&&&&&&&&\vdots\\
      \cdots\ar@{-{*}}[r]&z_{i-1}\ar@{-{*}}[r]&z_i\ar@{-{*}}[r]&z_{i+1}\ar@{-{*}}[r]&\cdots\ar@{-{*}}[r]&z_{j-1}\ar@{-{*}}[r]&z_j\ar@{-{*}}[r]&z_{j+1}\ar@{-{*}}[r]&\cdot&&\cdot\ar@{-{*}}[r]&z_{i-1}\ar@{-{*}}[r]&z_i\ar@{-{*}}[r]&z_{i+1}\ar@{-{*}}[r]&\cdots\ar@{-{*}}[r]&z_{j-1}\ar@{-{*}}[r]&z_j\ar@{-{*}}[r]&z_{j+1}\ar@{-{*}}[r]&\cdots\\
      \cdots\ar@{-{*}}[r]&z_{i-1}\ar@{-{*}}[r]&z'_i\ar@{-{*}}[r]&z_{i+1}\ar@{-{*}}[r]&\cdots\ar@{-{*}}[r]&z_{j-1}\ar@{-{*}}[r]&z_j\ar@{-{*}}[r]&z_{j+1}\ar@{-{*}}[r]&\cdot\ar@{<->}[rr]&\txt{\phantom{xx}}&\cdot\ar@{-{*}}[r]&z_{i-1}\ar@{-{*}}[r]&z_i\ar@{-{*}}[r]&z_{i+1}\ar@{-{*}}[r]&\cdots\ar@{-{*}}[r]&z_{j-1}\ar@{-{*}}[r]&z'_j\ar@{-{*}}[r]&z_{j+1}\ar@{-{*}}[r]&\cdots\\
      \cdots\ar@{-{*}}[r]&z_{i-1}\ar@{-{*}}[r]&z'_i\ar@{-{*}}[r]&z_{i+1}\ar@{-{*}}[r]&\cdots\ar@{-{*}}[r]&z_{j-1}\ar@{-{*}}[r]&z'_j\ar@{-{*}}[r]&z_{j+1}\ar@{-{*}}[r]&\cdot&&\cdot\ar@{-{*}}[r]&z_{i-1}\ar@{-{*}}[r]&z'_i\ar@{-{*}}[r]&z_{i+1}\ar@{-{*}}[r]&\cdots\ar@{-{*}}[r]&z_{j-1}\ar@{-{*}}[r]&z'_j\ar@{-{*}}[r]&z_{j+1}\ar@{-{*}}[r]&\cdots\\
      &&&&\vdots&&&&&&&&&&\vdots&&&&&,
    }
    \]
    where $j-i\geq 2$.  (This corresponds to exchanging the order of
    swaps across two faces which share no edges.)
  \item Exchanging a sequence of one of the following two forms for the other:
    \[
    \xymatrix@C=1.5ex@R=0ex{
      &&&\vdots&&&&&&\vdots\\
      \cdots\ar@{-{*}}[r]&z_i\ar@{-{*}}[r]&z_{i+1}\ar@{-{*}}[r]&z''_{i+2}\ar@{-{*}}[r]&z_{i+3}\ar@{-{*}}[r]&\cdots&&\cdots\ar@{-{*}}[r]&z_i\ar@{-{*}}[r]&z_{i+1}\ar@{-{*}}[r]&z''_{i+2}\ar@{-{*}}[r]&z_{i+3}\ar@{-{*}}[r]&\cdots\\
      \cdots\ar@{-{*}}[r]&z_i\ar@{-{*}}[r]&z'_{i+1}\ar@{-{*}}[r]&z''_{i+2}\ar@{-{*}}[r]&z_{i+3}\ar@{-{*}}[r]&\cdots\ar@{<->}[rr]&\txt{\phantom{xx}}&\cdots\ar@{-{*}}[r]&z_i\ar@{-{*}}[r]&z_{i+1}\ar@{-{*}}[r]&z'_{i+2}\ar@{-{*}}[r]&z_{i+3}\ar@{-{*}}[r]&\cdots\\
      \cdots\ar@{-{*}}[r]&z_i\ar@{-{*}}[r]&z'_{i+1}\ar@{-{*}}[r]&z_{i+2}\ar@{-{*}}[r]&z_{i+3}\ar@{-{*}}[r]&\cdots&&\cdots\ar@{-{*}}[r]&z_i\ar@{-{*}}[r]&z''_{i+1}\ar@{-{*}}[r]&z'_{i+2}\ar@{-{*}}[r]&z_{i+3}\ar@{-{*}}[r]&\cdots\\
      \cdots\ar@{-{*}}[r]&z_i\ar@{-{*}}[r]&z''_{i+1}\ar@{-{*}}[r]&z_{i+2}\ar@{-{*}}[r]&z_{i+3}\ar@{-{*}}[r]&\cdots&&\cdots\ar@{-{*}}[r]&z_i\ar@{-{*}}[r]&z''_{i+1}\ar@{-{*}}[r]&z_{i+2}\ar@{-{*}}[r]&z_{i+3}\ar@{-{*}}[r]&\cdots\\
      &&&\vdots&&&&&&\vdots&&&&.
    }
    \]
    (This corresponds to the six faces of the cube $\vcenter{\xymatrix@R=0.8ex@C=1.5ex@M=0.7ex{&&z_{i+1}\ar@{-{*}}[r]\ar@{-{*}}[dr]&z_{i+2}''\ar@{-{*}}[dr]\\\cdots\ar@{-{*}}[r]&z_{i}\ar@{-{*}}[ur]\ar@{-{*}}[r]\ar@{-{*}}[dr]&z_{i+1}'\ar@{-{*}}[ur]\ar@{-{*}}[dr]&z_{i+2}'\ar@{-{*}}[r]&z_{i+3}\ar@{-{*}}[r]&\cdots\\&&z_{i+1}''\ar@{-{*}}[r]\ar@{-{*}}[ur]&z_{i+2}\ar@{-{*}}[ur]}}$.)
  \end{enumerate}
  \end{enumerate}
  These facts are easy to check directly. Alternatively, they are obvious from a geometric reformulation. The maximal chains in the cube correspond to the vertices of the
  permutohedron $\Permu{k-1}$. The edges of $\Permu{k-1}$ correspond
  to swaps across two-dimensional faces, as described in
  \ref{item:max-chain:2d}, and the two-dimensional faces of
  $\Permu{k-1}$ are either squares or hexagons, corresponding to the
  second and third moves of \ref{item:max-chain:3d}. (For more details
  about the permutohedron, see, for instance,~\cite{Zie-top-polytopes}.) Therefore,
  \ref{item:max-chain:2d} can be restated by saying that any two vertices
  of $\Permu{k-1}$ can be connected by a path along the edges, and
  \ref{item:max-chain:3d} can be restated by saying that any two such paths
  can be connected by homotoping across two-dimensional faces.

  Now consider the given data~\ref{data:set}--\ref{data:bijection}
  that satisfies
  conditions~\ref{condition:matching}--\ref{condition:hexagon}. For
  each $u\geq_k v$, choose a maximal chain
  $\mf{m}^{u,v}=\{\xymatrix@C=1.5ex{u=z^{u,v}_0\ar@{-{*}}[r]&\cdots\ar@{-{*}}[r]&z^{u,v}_i\ar@{-{*}}[r]&\cdots\ar@{-{*}}[r]&z^{u,v}_k=v}\}$ and
  define
  \[
  F(\cmorph{u}{v})=F(\cmorph{z^{u,v}_{k-1}}{z^{u,v}_k})\times_{F(z^{u,v}_{k-1})}\cdots\times_{F(z^{u,v}_1)} F(\cmorph{z^{u,v}_0}{z^{u,v}_1}).
  \]
  For $u\geq_k v\geq_\ell w$, define the $2$-isomorphism
  $F_{u,v,w}\from F(\cmorph{v}{w})\times_{F(v)}F(\cmorph{u}{v})\to
  F(\cmorph{u}{w})$ by choosing a sequence of maximal chains
  connecting $\mf{m}^{v,w}\cup\mf{m}^{u,v}$ to $\mf{m}^{u,w}$ of the
  type described in \ref{item:max-chain:2d}, and using the maps
  provided by the data~\ref{data:bijection}. Since the data satisfies
  conditions~\ref{condition:matching}--\ref{condition:hexagon},
  \ref{item:max-chain:3d} implies that the $2$-isomorphism is
  independent of the choice of the sequence of maximal chains. The
  same argument shows that these $2$-isomorphisms satisfy
  \Diagram{lax-functor-pentagon}. Therefore, this defines a strictly
  unitary lax $2$-functor $F\from\CCat{n}\to\BurnsideCat$. The
  construction is clearly unique up to natural isomorphism.
\end{proof}

Notice that the existence of the $F_{u,v,v',w}$ implies the following:
\begin{enumerate}[label=(C-\arabic*),ref=(C-\arabic*),start=0]
\item\label{condition:bijective} for every two-dimensional face
  $\vcenter{\xymatrix@R=0ex@C=1ex{&v\ar@{-{*}}[dr]\\u\ar@{-{*}}[ur]\ar@{-{*}}[dr]&&w\\&v'\ar@{-{*}}[ur]}}$,
  \(\card{F(\cmorph{v}{w})\times_{F(v)}
    F(\cmorph{u}{v})}=\card{F(\cmorph{v'}{w})\times_{F(v')}
    F(\cmorph{u}{v'})}.\)
\end{enumerate}
The converse is not true: given
data~\ref{data:set}--\ref{data:correspondence} satisfying
condition~\ref{condition:bijective}, there might be no way, one way,
or more than one way of constructing data~\ref{data:bijection}
satisfying
conditions~\ref{condition:matching}--\ref{condition:hexagon}, as the
following two examples illustrate.

\begin{example}\label{exam:multiple-extend}
  Assume we have the following
  data~\ref{data:set}--\ref{data:correspondence} on the $2$-dimensional
  cube
  $\vcenter{\xymatrix@R=0ex@C=1ex{&10\ar@{-{*}}[dr]\\11\ar@{-{*}}[ur]\ar@{-{*}}[dr]&&00\\&01\ar@{-{*}}[ur]}}$:
  \[\xymatrix{
    &\circ\ar@{-}@/^/[dr]|-{b_1}\ar@{-}@/_/[dr]|-{b_2}\\
    \circ\ar@{-}@/^/[ur]|-{a_1}\ar@{-}@/_/[ur]|-{a_2}\ar@{-}@/^/[dr]|-{c_1}\ar@{-}@/_/[dr]|-{c_2}&&\circ\\
    &\circ\ar@{-}@/^/[ur]|-{d_1}\ar@{-}@/_/[ur]|-{d_2}
  }\]
  (We have not labeled the elements of the one-element sets $F(v)$,
  only the elements of the two-element correspondences
  $F(\cmorph{u}{v})$.)  This does not uniquely specify
  data~\ref{data:bijection}. Up to natural isomorphism, we may assume
  \[
  F_{11,10,01,00}\from\{a_1b_1,a_1b_2,a_2b_1,a_2b_2\}\to\{c_1d_1,c_1d_2,c_2d_1,c_2d_2\}
  \]
  sends $a_1b_1$ to $c_1d_1$; but there are still six ways to
  define the bijection $F_{11,10,01,00}$ that are not naturally
  isomorphic. (Indeed, some of these six functors are not even
  equivalent in the sense of \Definition{equiv-functors}; see
  \Remark{above-two-diff}.)
\end{example}

\begin{example}\label{exam:zero-extend}
  Assume we have the following
  data~\ref{data:set}--\ref{data:correspondence} on the
  $3$-dimensional cube
  $\vcenter{\xymatrix@R=0ex@C=0ex{
      111\ar@{-{*}}[rr]\ar@{-{*}}[dr]\ar@{-{*}}[dd]&&110\ar@{-{*}}[dr]\ar@{-{*}}[dd]\\
      &101\ar@{-{*}}[rr]\ar@{-{*}}[dd]&&100\ar@{-{*}}[dd]\\
      011\ar@{-{*}}[dr]\ar@{-{*}}[rr]&&010\ar@{-{*}}[dr]\\
      &001\ar@{-{*}}[rr]&&000
    }}$:
  \[\xymatrix@C=0.1ex@R=0.1ex@M=0ex{
    & && &&& && &&&\circ\ar@{-}[9,3]|(0.3){b_1}\ar@{-}[16,0]\ar@{-}[18,-2]\\
    &p_1\ar@{-}[8,4]|(0.7){c_2}\ar@{-}[7,5]|(0.7){c_1}\ar@{-}[0,9]|(0.7){a_2}\ar@{-}[-1,10]|(0.7){a_1}\ar@{-}[16,0]\ar@{-}[17,-1] && &&& && &&\circ\ar@{-}[8,4]|(0.3){b_2}\ar@{-}[16,0]\ar@{-}[18,-2]&\\
    p_2\ar@{-}[8,4]|(0.7){c_3}\ar@{-}[9,3]|(0.7){c_4}\ar@{-}[1,8]|(0.7){a_4}\ar@{-}[0,9]|(0.7){a_3}\ar@{-}[16,0]\ar@{-}[15,1]& && &&& && &\circ\ar@{-}[8,4]|(0.4){b_3}\ar@{-}[16,0]\ar@{-}[14,2] &&\\
    & && &&& && \circ\ar@{-}[7,5]|(0.5){b_4}\ar@{-}[16,0]\ar@{-}[14,2]&&&\\
    \txt{\phantom{X}}& &\txt{\phantom{XXXX}}& &&&
    &\txt{\phantom{XXXX}}& &&& &\txt{\phantom{XXXX}}&\\
    \txt{\phantom{X}}\\
    \txt{\phantom{X}}\\
    \txt{\phantom{X}}\\
    & && &&&\circ\ar@{-}[1,8]|(0.3){d_1}\ar@{-}[16,0]\ar@{-}[18,-2]\\
    & && &&\circ\ar@{-}[0,9]|(0.3){d_2}\ar@{-}[16,0]\ar@{-}[18,-2]& && &&& && &\circ\ar@{-}[16,0]\ar@{-}[17,-1]\\
    & && &\circ\ar@{-}[0,9]|(0.3){d_3}\ar@{-}[16,0]\ar@{-}[15,1]&& && &&& && \circ\ar@{-}[16,0]\ar@{-}[15,1]&\\
    & && \circ\ar@{-}[-1,10]|(0.3){d_4}\ar@{-}[16,0]\ar@{-}[13,3]&&& \\
    \txt{\phantom{X}}\\
    \txt{\phantom{X}}\\
    \txt{\phantom{X}}\\
    \txt{\phantom{X}}\\
    & && &&& && &&&\circ\ar@{-}[9,3]|(0.3){f_1}\\
    &\circ\ar@{-}[8,4]|(0.6){g_2}\ar@{-}[7,5]|(0.5){g_1}\ar@{-}[0,9]|(0.7){e_2}\ar@{-}[-1,10]|(0.7){e_1} && &&& && &&\circ\ar@{-}[8,4]|(0.3){f_2}&\\
    \circ\ar@{-}[8,4]|(0.7){g_3}\ar@{-}[9,3]|(0.7){g_4}\ar@{-}[1,8]|(0.7){e_4}\ar@{-}[0,9]|(0.7){e_3}& && &&& && &\circ\ar@{-}[8,4]|(0.3){f_3} &&\\
    & && &&& && \circ\ar@{-}[7,5]|(0.3){f_4}&&&\\
    \txt{\phantom{X}}\\
    \txt{\phantom{X}}\\
    \txt{\phantom{X}}\\
    \txt{\phantom{X}}\\
    & && &&&\circ\ar@{-}[1,8]|(0.3){h_1}\\
    & && &&\circ\ar@{-}[0,9]|(0.3){h_2}& && &&& && &q_1\\
    & && &\circ\ar@{-}[0,9]|(0.3){h_3}&& && &&& && q_2&\\
    & && \circ\ar@{-}[-1,10]|(0.3){h_4}&&&
  }
  \]
  (We have only labeled the elements of the two-element sets $F(111)$
  and $F(000)$, and the elements of some of the correspondences
  $F(\cmorph{u}{v})$.
In case the picture is not clear, the
  correspondence $F(\cmorph{110}{010})$ has eight elements with the
  following (source, target) pairs: $(t(a_1),t(e_1))$,
  $(t(a_1),t(e_3))$, $(t(a_2),t(e_2))$, $(t(a_2),t(e_4))$,
  $(t(a_3),t(e_1))$, $(t(a_3),t(e_3))$, $(t(a_4),t(e_2))$, and
  $(t(a_4),t(e_4))$; and the correspondence $F(\cmorph{101}{001})$
  also has eight elements with the following (source, target) pairs:
  $(t(c_1),t(g_1))$, $(t(c_1),t(g_3))$, $(t(c_2),t(g_2))$,
  $(t(c_2),t(g_4))$, $(t(c_3),t(g_2))$, $(t(c_3),t(g_3))$,
  $(t(c_4),t(g_1))$, and $(t(c_4),t(g_4))$.)
  Let us attempt
  to construct data~\ref{data:bijection}. Up to natural isomorphism,
  we may assume the bijection $F_{111,110,101,100}$ sends $a_1b_1$ to
  $c_1d_1$. Looking at the correspondence from $p_1$ to $q_1$,
  conditions~\ref{condition:matching}--\ref{condition:hexagon} force
  $F_{011,010,001,000}(e_1f_1)=g_1h_1$. Similarly, using the
  correspondence from $p_1$ to $q_2$, we get
  $F_{011,010,001,000}(e_3f_3)=g_3h_3$. Using the correspondence from
  $p_2$ to $q_1$, the former implies
  $F_{111,110,101,100}(a_3b_3)=c_4d_4$; but using the correspondence
  from $p_2$ to $q_2$, the latter implies
  $F_{111,110,101,100}(a_3b_3)=c_3d_3$, which is a contradiction.  So
  even though the given data satisfies
  condition~\ref{condition:bijective}, it is impossible to construct
  data~\ref{data:bijection} satisfying
  conditions~\ref{condition:matching}--\ref{condition:hexagon}.
\end{example}

\section{Properties of such functors}\label{sec:equivalence}

In this section, we will discuss a few constructions that can be
carried out with stable functors from the cube to the
Burnside category.

\begin{definition}\label{def:totalization}
  If $F\from\CCat{n}\to\BurnsideCat$, then $\Total(F)\in\Complexes$ is
  defined to be the following chain complex. The chain group is
  defined to be
  \[
  \Total(F)=\bigoplus_{v\in\CCat{n}}\AbFunc(F(v)),
  \]
  the homological grading of the summand $\AbFunc(F(v))$ is defined to be
  $\card{v}$, and the differential $\del\from\Total(F)\to\Total(F)$ is
  defined by declaring the component $\del_{u,v}$ of $\del$ that maps
  from $\AbFunc(F(u))$ to $\AbFunc(F(v))$ to be
  \[
  \del_{u,v}=\begin{cases}(-1)^{s_{u,v}}\AbFunc(F(\cmorph{u}{v}))&\text{if
      $u\geq_1 v$,}\\0&\text{otherwise.}\end{cases}
  \]
  (Here $s_{u,v}$ is the sign assignment from \Definition{sign-assign}
  and $\AbFunc\from\BurnsideCat\to\AbelianGroups$ is the functor from
  \Definition{abfunc}.)  For all $r\in\Z$, define $\Total(\Sigma^r
  F)=\Sigma^r\Total(F)$, the chain complex with gradings shifted up by
  $r$.
\end{definition}

\begin{remark}
  In terms of the reformulation from \Proposition{redefine}, the
  functor $F\from\CCat{n}\to\BurnsideCat$ is equivalent to
  data~\ref{data:set}--\ref{data:bijection} satisfying
  conditions~\ref{condition:matching}--\ref{condition:hexagon}. In
  order to define the chain complex $\Total(F)$, it is enough to have
  data~\ref{data:set}--\ref{data:correspondence} satisfying
  condition~\ref{condition:bijective}.
\end{remark}

\begin{definition}
  If $F,F'\from\CCat{n}\to\BurnsideCat$ are two $2$-functors, then the
  \emph{coproduct} $F\amalg F'\from\CCat{n}\to\BurnsideCat$ is defined as
  follows.
  \begin{enumerate}[leftmargin=*]
  \item For all $v\in\CCat{n}$, $(F\amalg F')(v)=F(v)\amalg F'(v)$.
  \item For all $u>v$, $(F\amalg
    F')(\cmorph{u}{v})=F(\cmorph{u}{v})\amalg F'(\cmorph{u}{v})$ with
    the source and target maps defined component-wise.
  \item For all $u>v>w$, the map $(F\amalg F')_{u,v,w}$ from $(F\amalg
    F')(\cmorph{v}{w})\times_{(F\amalg F')(v)}(F\amalg
    F')(\cmorph{u}{v})\cong
    \big(F(\cmorph{v}{w})\times_{F(v)}F(\cmorph{u}{v})\big)\amalg\big(F'(\cmorph{v}{w})\times_{F'(v)}F'(\cmorph{u}{v})\big)$
    to $(F\amalg F')(\cmorph{u}{w})=F(\cmorph{u}{w})\amalg
    F'(\cmorph{u}{w})$ is defined to be $F_{u,v,w}$ on the first
    component and $F'_{u,v,w}$ on the second component.
  \end{enumerate}
  It is straightforward to check that this defines a strictly unitary
  lax $2$-functor $\CCat{n}\to\BurnsideCat$, and $\Total(F\amalg
  F')=\Total(F)\oplus\Total(F')$.
\end{definition}

\begin{definition}
  If $F_1\from\CCat{n_1}\to\BurnsideCat$, $F_2\from\CCat{n_2}\to\BurnsideCat$
  are two $2$-functors, then the \emph{product} $F_1\times
  F_2\from\CCat{n_1+n_2}\to\BurnsideCat$ is defined as follows.
  \begin{enumerate}[leftmargin=*]
  \item For all $(v_1,v_2)\in\CCat{n_1}\times\CCat{n_2}$, $(F_1\times
    F_2)((v_1,v_2))=F_1(v_1)\times F_2(v_2)$.
  \item For all $(u_1,u_2)>(v_1,v_2)$, $(F_1\times
    F_2)(\cmorph{(u_1,u_2)}{(v_1,v_2)})=F_1(\cmorph{u_1}{v_1})\times
    F_2(\cmorph{u_2}{v_2})$ with the source and target maps defined
    component-wise, with the understanding that if $u_i=v_i$ the
    correspondence $F_i(\cmorph{u_i}{v_i})$ is the identity.
  \item For all $(u_1,u_2)>(v_1,v_2)>(w_1,w_2)$, the map $(F_1\times
    F_2)_{(u_1,u_2),(v_1,v_2),(w_1,w_2)}$ is defined as
    \[
    (F_1\times
    F_2)_{(u_1,u_2),(v_1,v_2),(w_1,w_2)}(x_1,x_2)=\big((F_{1})_{u_1,v_1,w_1}(x_1),
    (F_{2})_{u_2,v_2,w_2}(x_2)\big),
    \]
    with the understanding that if $u_i=v_i$ or $v_i=w_i$ then
    $(F_i)_{u_i,v_i,w_i}$ is the identity map.
\end{enumerate}
Once again, it is straightforward to check that this defines a
strictly unitary lax $2$-functor $\CCat{n_1+n_2}\to\BurnsideCat$. This time,
$\Total(F_1\times F_2)=\Total(F_1)\otimes\Total(F_2)$: the
sign assignment from \Definition{sign-assign} translates into the
Koszul sign convention on the tensor product.
\end{definition}

\begin{definition}
  A \emph{face inclusion} $\iota$ is a functor $\CCat{n}\to\CCat{N}$
  that is injective on objects, and preserves the relative
  gradings. Face inclusions can be described as functors of the
  following form: Fix $U\geq_n W$ in $\CCat{N}$, and let
  $\{V_1,\dots,V_n\}=\Set{V\in\CCat{N}}{U\geq_{n-1}V\geq_1 W}$. The
  functor that sends the object $(v_1,\dots,v_n)\in\CCat{n}$ to the
  object $(W+\sum_i v_i (V_i-W))\in\CCat{N}$ is a face inclusion.

  Let $\card{\iota}=\card{\iota(v)}-\card{v}$ for any $v\in\CCat{n}$
  be the grading of $W$.
\end{definition}

\begin{remark}
  The autoequivalences $\iota\from\CCat{n}\to\CCat{n}$ are face
  inclusions. Note that the group of autoequivalences of $\CCat{n}$ is the
  permutation group $\permu{n}$, where $\sigma\in\permu{n}$
  corresponds to the autoequivalence that sends
  $(v_1,\dots,v_n)\in\CCat{n}$ to
  $(v_{\si^{-1}(1)},\dots,v_{\si^{-1}(n)})$.
\end{remark}

\begin{definition}
  If $\iota\from\CCat{n}\into\CCat{N}$ is a face inclusion, and
  $F\from\CCat{n}\to\BurnsideCat$ is a functor, then the induced
  functor $F_{\iota}\from\CCat{N}\to\BurnsideCat$ is uniquely defined
  by imposing $F=F_{\iota}\circ \iota$, and for all objects
  $v\in\CCat{N}\setminus\iota(\CCat{n})$, $F_{\iota}(v)=\emptyset$.

  Note that the chain complexes $\Total(F_{\iota})$ and
  $\Sigma^{\card{\iota}}\Total(F)$ are canonically isomorphic except
  for signs. For all $u\geq_1 v$ in $\CCat{n}$, the component of the
  differential from $\AbFunc(F(u))$ to $\AbFunc(F(v))$ has sign
  $(-1)^{s_{\iota(u),\iota(v)}}$ in $\Total(F_{\iota})$ and sign
  $(-1)^{\card{\iota}+s_{u,v}}$ in
  $\Sigma^{\card{\iota}}\Total(F)$. To every $v\in\CCat{n}$, assign
  $t_v\in\Z/2$, so that for all $u\geq_1 v$,
  $t_u+t_v=\card{\iota}+s_{u,v}+s_{\iota(u),\iota(v)}$. (Such an
  assignment exists, and is unique up to adding a constant to all the
  assignments.) Then the map
  $\Total(F_{\iota})\to\Sigma^{\card{\iota}}\Total(F)$, defined to be
  $(-1)^{t_v}$ times the identity on the summand $\AbFunc(F(v))$ for
  all $v$ is an isomorphism of chain complexes, and is canonical up
  to an overall sign. 
\end{definition}

\begin{definition}\label{def:natural-transform}
  A \emph{natural transformation} $\eta\from F\to F'$ between two
  $2$-functors $F,
  F'\from\CCat{n}\to\BurnsideCat$ is a strictly unitary lax
  $2$-functor $\eta\from\CCat{n+1}\to\BurnsideCat$ so that
  $\eta|_{\{1\}\times\CCat{n}}=F$ and
  $\eta|_{\{0\}\times\CCat{n}}=F'$ (with respect to the obvious
  identifications of $\{i\}\times \CCat{n}$ with $\CCat{n}$). 

  Note that a natural transformation $\eta$ induces a chain map
  $\Total(\eta)\from\Total(F)\to\Total(F')$, and this is functorial in
  the following sense: If $\eta\from F\to F'$ and $\theta\from F'\to
  F''$ are two natural transformations, then
  $\Total(\theta\circ\eta)=\Total(\theta)\circ\Total(\eta)$. Moreover,
  $\Total$ of the functor $\CCat{n+1} \to \BurnsideCat$
  determined by $\eta$ is the mapping cone of the chain map
  $\Total(\eta)$.
\end{definition}

\begin{definition}\label{def:equiv-functors}
  Two stable functors $(E_1\from\CCat{m_1}\to\BurnsideCat,q_1)$ and
  $(E_2\from\CCat{m_2}\to\BurnsideCat,q_2)$ are defined to be
  \emph{stably equivalent} if there is a sequence of stable functors
  $\{(F_i\from\CCat{n_i}\to\BurnsideCat,r_i)\}$ for $0\leq i\leq
  \ell$, with $\Sigma^{q_1}E_1=\Sigma^{r_0}F_0$ and
  $\Sigma^{q_2}E_2=\Sigma^{r_\ell}F_{\ell}$, such that for every
  adjacent pair $\{\Sigma^{r_i}F_i,\Sigma^{r_{i+1}}F_{i+1}\}$, one of
  the following holds:
  \begin{enumerate}[leftmargin=*]
  \item $(n_i,r_i)=(n_{i+1},r_{i+1})$, and there is a natural
    transformation $\eta$, either from $F_i$ to $F_{i+1}$ or from
    $F_{i+1}$ to $F_i$, so that the induced map $\Total(\eta)$ is a chain homotopy
    equivalence.
  \item $r_{i+1}=r_i-\card{\iota}$ and $F_{i+1}=(F_i)_{\iota}$ for
    some face inclusion $\iota\from\CCat{n_i}\into\CCat{n_{i+1}}$; or
    $r_{i+1}=r_i+\card{\iota}$ and $F_i=(F_{i+1})_{\iota}$ for some face inclusion
    $\iota\from\CCat{n_{i+1}}\into\CCat{n_i}$.
  \end{enumerate}
  We call such a sequence $\{\Sigma^{r_i}F_i\}$ a \emph{stable
    equivalence} between $\Sigma^{q_1}E_1$ and $\Sigma^{q_2}E_2$. Note
  that a stable equivalence induces a chain homotopy equivalence
  $\Total(\Sigma^{q_1}E_1)\to\Total(\Sigma^{q_2}E_2)$, well-defined up
  to choices of inverses of chain homotopy equivalences and an overall
  sign.
\end{definition}

As is standard, instead of having to consider an arbitrary sequence of
zig-zags of natural transformations, it is enough to consider a single
zig-zag.

\begin{proposition}
  If stable functors $(E_1\from\CCat{m_1}\to\BurnsideCat,q_1)$ and
  $(E_2\from\CCat{m_2}\to\BurnsideCat,q_2)$ are stably equivalent,
  then there exist stable functors
  $(F_1\from\CCat{n}\to\BurnsideCat,r)$,
  $(F_2\from\CCat{n}\to\BurnsideCat,r)$, and
  $(G\from\CCat{n}\to\BurnsideCat,r)$, satisfying the following for
  all $i\in\{1,2\}$:
  \begin{enumerate}[leftmargin=*]
  \item $F_i=(E_{i})_{\iota_i}$ for some face inclusion
    $\iota_i\from\CCat{m_i}\into\CCat{n}$, and $q_i=r+\card{\iota_i}$.
  \item There is a natural transformation $\eta_i\from F_i\to G$, so
    that $\Total(\eta_i)$ is a chain homotopy equivalence.
  \end{enumerate}
\end{proposition}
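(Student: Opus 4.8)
The plan is to run the standard ``calculus of fractions'' manipulation on the stable equivalence supplied by \Definition{equiv-functors}. First I would assemble a few elementary facts. Face inclusions are closed under composition, with $(F_{\iota})_{\iota'}=F_{\iota'\circ\iota}$ and $\card{\iota'\circ\iota}=\card{\iota}+\card{\iota'}$. The operation $F\mapsto F_{\iota}$ extends to natural transformations: writing $\CCat{n+1}=\CCat{1}\times\CCat{n}$, a face inclusion $\iota\from\CCat{n}\into\CCat{N}$ induces $\iota^{+}=\Id_{\CCat{1}}\times\iota\from\CCat{n+1}\into\CCat{N+1}$ with $\card{\iota^{+}}=\card{\iota}$, so a natural transformation $\eta\from F\to F'$ (a functor $\CCat{n+1}\to\BurnsideCat$) yields $\eta_{\iota}\defeq\eta_{\iota^{+}}\from F_{\iota}\to F'_{\iota}$ whose totalization is $\Sigma^{\card{\iota}}\Total(\eta)$ up to sign, hence a chain homotopy equivalence iff $\Total(\eta)$ is. Natural transformations compose vertically, and $\Total$ is functorial on them (\Definition{natural-transform}), so composites of chain-homotopy-equivalence-inducing natural transformations are again such. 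Finally, two face inclusions $\iota\from\CCat{k}\into\CCat{m}$ and $\kappa\from\CCat{k}\into\CCat{m'}$ out of a common cube admit a pushout $\CCat{m}\xrightarrow{b}\CCat{m+m'-k}\xleftarrow{c}\CCat{m'}$ with $b\circ\iota=c\circ\kappa$ in the category of cubes and face inclusions (freeze both families of extra coordinates), whence $(F_{\iota})_{b}=(F_{\kappa})_{c}$.

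Then I would induct on the length $\ell$ of a stable equivalence $\{\Sigma^{r_i}F_i\}_{i=0}^{\ell}$ with $\Sigma^{r_0}F_0=\Sigma^{q_1}E_1$ and $\Sigma^{r_\ell}F_\ell=\Sigma^{q_2}E_2$; the case $\ell=0$ is trivial. For the inductive step, apply the inductive hypothesis to the first $\ell-1$ moves to obtain $A_1=(E_1)_{a_1}$ and $A_2=(F_{\ell-1})_{a_2}$ on a common cube at a common grading, together with natural transformations $A_1\to H\leftarrow A_2$ both inducing chain homotopy equivalences, and then absorb the last move. If it is a face move with $F_{\ell-1}=(E_2)_{\kappa}$, then $A_2=(E_2)_{a_2\circ\kappa}$ already has the required form and we take $G=H$. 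If it is a face move with $E_2=(F_{\ell-1})_{\kappa}$, apply $(-)_{b}$ for the pushout $(b,c)$ of $(a_2,\kappa)$ to the whole span $A_1\to H\leftarrow A_2$, using $(A_2)_{b}=(E_2)_{c}$. If it is a natural transformation $E_2\to F_{\ell-1}$, then $(E_2)_{a_2}\to A_2\to H$ together with $A_1\to H$ is the desired cospan. The only case needing genuine work is a natural transformation $\eta\from F_{\ell-1}\to E_2$: after applying $(-)_{a_2}$ this leaves a \emph{span} $H\xleftarrow{f}A_2\xrightarrow{g}B_2$ (with $B_2=(E_2)_{a_2}$) of chain-homotopy-equivalence-inducing natural transformations that must be turned into a cospan.

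To resolve this last case I would realize the double mapping cylinder of $f$ and $g$ as an honest functor via \Proposition{redefine}: on $\CCat{n+2}=\CCat{1}\times\CCat{1}\times\CCat{n}$ let $G$ send $(1,1,v)\mapsto A_2(v)$, $(1,0,v)\mapsto H(v)$, $(0,1,v)\mapsto B_2(v)$, and $(0,0,v)\mapsto\emptyset$; prescribe the data~\ref{data:set}--\ref{data:bijection} on the faces $\{1\}\times\CCat{1}\times\CCat{n}$ and $\CCat{1}\times\{1\}\times\CCat{n}$ to be that of $f$ and $g$ (which agree on the shared $A_2$-face on the nose), and take all remaining correspondences and bijections to be empty, which is forced since each involves a vertex mapped to $\emptyset$. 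Conditions~\ref{condition:matching}--\ref{condition:hexagon} then hold because every two- or three-dimensional face either lies inside (the domain of) $f$ or of $g$, or contains a vertex mapped to $\emptyset$. A direct computation from \Definition{totalization} shows that $\Total(G)$ is, up to shift, $\Cone\big(\Total(A_2)\to\Total(H)\oplus\Total(B_2)\big)$, i.e.\ the homotopy pushout of $\Total(f)$ and $\Total(g)$; hence the evident natural transformations into $G$ from the face inclusions of $H$ and of $B_2$ (onto the $\{1\}\times\{0\}\times\CCat{n}$ and $\{0\}\times\{1\}\times\CCat{n}$ faces) induce chain homotopy equivalences --- the first because $\Total(g)$ is one, the second because $\Total(f)$ is. Vertically composing $(-)_{j}$ of the natural transformation $A_1\to H$ (for the face inclusion $j$ onto the $H$-face) with $H\to G$ produces $F_1=(E_1)_{\iota_1}\to G$, while $F_2=(E_2)_{\iota_2}\to G$ is the map just constructed; chasing the grading shifts through \Definition{equiv-functors} makes all the $\Sigma$'s agree.

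I expect the main obstacle to be precisely the construction in the last paragraph: writing down the $2$-isomorphism data of $G$ explicitly from that of $f$ and $g$ and verifying the coherence conditions, together with checking that the face-included natural transformations into $G$ really compose \emph{vertically} with those coming from the inductive hypothesis, so that the two final maps $\Total(\eta_i)$ are genuine chain homotopy equivalences rather than mere quasi-isomorphisms. Everything else is bookkeeping with face inclusions, pushouts of cubes, and grading shifts.
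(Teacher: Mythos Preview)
Your argument is correct and follows the same underlying strategy as the paper: reduce to verifying an Ore-type condition that a span of chain-homotopy-equivalence-inducing natural transformations can be completed to a cospan, realized concretely as a homotopy pushout of cube diagrams.

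The organization and the pushout construction differ slightly from the paper's, and it is worth knowing the paper's version since it is a bit more economical. Rather than inducting on the length of the zig-zag and handling face moves via a pushout of face inclusions at each step, the paper first uses the three compatibility facts you list (composition of face inclusions, pushforward of natural transformations along face inclusions, and composition of natural transformations) to \emph{normalize} the entire zig-zag so that all face moves occur at the two ends and the interior is a pure zig-zag of natural transformations on a fixed cube; this eliminates your pushout-of-cubes bookkeeping. For the remaining span $F\xleftarrow{\eta}G\xrightarrow{\eta'}F'$ on $\CCat{n}$, the paper builds the receiving object on $\CCat{n+1}$ rather than $\CCat{n+2}$: it takes $H$ to be the quotient of $\eta\amalg\eta'$ (each viewed as a functor $\CCat{n+1}\to\BurnsideCat$) obtained by identifying the two copies of $G$ on $\{1\}\times\CCat{n}$, so $H(1,v)=G(v)$ and $H(0,v)=F(v)\amalg F'(v)$, with the natural transformations $F_{\iota_0}\to H\leftarrow F'_{\iota_0}$ given by the evident inclusions. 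Your $\CCat{n+2}$ construction with an empty corner is an equally valid model of the same homotopy pushout, just one dimension larger; the coherence check you flagged is indeed routine, since every face touching the $(0,0)$-corner has all data forced to be empty.
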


\begin{proof}
  We can compose natural transformations $\eta\from F\to F'$ and
  $\eta'\from F'\to F''$ to get a natural transformation $\eta'\circ
  \eta\from F\to F''$; and since
  $\Total(\eta'\circ\eta)=\Total(\eta')\circ\Total(\eta)$, if
  $\Total(\eta)$ and $\Total(\eta')$ are chain homotopy equivalences,
  so is $\Total(\eta'\circ\eta)$. Similarly, for any face inclusions
  $\iota\from\CCat{n}\into \CCat{n'}$ and
  $\iota'\from\CCat{n'}\into\CCat{n''}$, the composition
  $\iota'\circ\iota\from\CCat{n}\into\CCat{n''}$ is a face inclusion
  with $\card{\iota'\circ\iota}=\card{\iota}+\card{\iota'}$; and for
  any $F\from\CCat{n}\to\BurnsideCat$,
  $(F_{\iota})_{\iota'}=F_{\iota'\circ\iota}$. Finally, if $\eta\from
  F\to F'$ is a natural transformation between functors
  $F,F'\from\CCat{n}\to\BurnsideCat$, and
  $\iota\from\CCat{n}\into\CCat{N}$ is a face inclusion, then there is
  an induced natural transformation $\eta_{\iota}\from F_{\iota}\to
  F'_{\iota}$; and if $\Total(\eta)$ is a chain homotopy equivalence,
  so is $\Total(\eta_{\iota})$.

  Using these moves, we can convert any stable equivalence
  $\{\Sigma^{r_i}F_i\}$ from $\Sigma^{q_1}E_1$ to $\Sigma^{q_2}E_2$ to
  one of the form $\{\Sigma^{q_1}E_1,\Sigma^{r}F_1=\Sigma^r
  G_0,\Sigma^{r}G_1,\dots,
  \Sigma^{r}G_{\ell-1},\Sigma^{r}G_\ell=\Sigma^{r}F_2,
  \Sigma^{q_2}E_2\}$, where $G_0,\dots,G_{\ell}$ are all functors
  $\CCat{n}\to\BurnsideCat$; $F_i=(E_i)_{\iota_i}$ for some face
  inclusion $\iota_i\from\CCat{m_i}\into\CCat{n}$ and $q_i=r+\card{\iota_i}$; and
  there is a zig-zag of natural transformations connecting
  $\{G_0,\dots,G_\ell\}$, inducing chain homotopy equivalences among
  $\Total(G_i)$.

  So, in order to prove the proposition, it is enough to show that we
  can we can convert a zig-zag of the
  form $F\stackrel{\eta}{\leftarrow} G\stackrel{\eta'}{\rightarrow}
  F'$ with $\Total(\eta)$ and $\Total(\eta')$ chain homotopy
  equivalences to a zig-zag of the form $F\stackrel{\theta}{\rightarrow}
  H\stackrel{\theta'}{\leftarrow} F'$ with $\Total(\theta)$ and
  $\Total(\theta')$ chain homotopy equivalences. 
  We can achieve this
  by working on the cube $\CCat{n+1}$ instead of $\CCat{n}$. That is,
  we will construct $H\from\CCat{n+1}\to\BurnsideCat$, and
  $\theta\from F_{\iota_0}\to H$, $\theta'\from F'_{\iota_0}\to H$ so
  that $\Total(\theta)$ and $\Total(\theta')$ are chain homotopy
  equivalences (with $\iota_0$ denoting the face inclusion
  $\CCat{n}\cong\{0\}\times\CCat{n}\into\CCat{n+1}$).

  To define $H$, note that the natural transformations $\eta$ and
  $\eta'$ are thought of as functors $\CCat{n+1}\to\BurnsideCat$
  satisfying $\eta|_{\{1\}\times\CCat{n}}=
  \eta'|_{\{1\}\times\CCat{n}}=G_{\iota_1}|_{\{1\}\times\CCat{n}}$
  (with $\iota_1$ denoting the face inclusion
  $\CCat{n}\cong\{1\}\times\CCat{n}\into\CCat{n+1}$), and
  $\eta|_{\{0\}\times\CCat{n}}=F_{\iota_0}|_{\{0\}\times\CCat{n}}$,
  and
  $\eta'|_{\{0\}\times\CCat{n}}=F'_{\iota_0}|_{\{0\}\times\CCat{n}}$. Define
  $H$ to be the quotient of $\eta\amalg\eta'$ by identifying
  $\eta|_{\{1\}\times\CCat{n}}$ with
  $\eta'|_{\{1\}\times\CCat{n}}$. The natural transformations
  $F_{\iota_0}\stackrel{\theta}{\to} H$ and
  $F'_{\iota_0}\stackrel{\theta'}{\to} H$ come from inclusions. Since
  $\Total(\eta)$ is a chain homotopy equivalence, so is
  $\Total(\theta')$; and since $\Total(\eta')$ is a chain homotopy
  equivalence, so is $\Total(\theta)$. This concludes the proof.
\end{proof}

We conclude this section with some illustrative examples.

\begin{example}\label{exam:rp2-smash-rp2}
  Let $\mb{P}\from\CCat{}\to\BurnsideCat$ denote the functor that
  assigns one-element sets to $1$ and $0$, and a two-element
  correspondence to $\cmorph{1}{0}$. Extend the data from
  \Example{multiple-extend} to a functor
  $F^{(1)}\from\CCat{2}\to\BurnsideCat$ by declaring the matching
  $F^{(1)}_{11,10,01,00}$ to be the map
  \[
  a_1b_1\mapsto c_1d_1\qquad a_1b_2\mapsto c_2d_1\qquad a_2b_1\mapsto c_1d_2\qquad a_2b_2\mapsto c_2d_2.
  \]
  Then $F^{(1)}$ is naturally isomorphic to $\mb{P}\times\mb{P}$.
\end{example}

\begin{example}\label{exam:rp2-wedge-rp2}
  Consider once again the data from \Example{multiple-extend}; this
  time define the functor $F^{(2)}\from\CCat{2}\to\BurnsideCat$ by
  declaring the matching $F^{(2)}_{11,10,01,00}$ to be the map
  \[
  a_1b_1\mapsto c_1d_1\qquad a_1b_2\mapsto c_1d_2\qquad a_2b_1\mapsto c_2d_1\qquad a_2b_2\mapsto c_2d_2.
  \]
  Then $F^{(2)}$ is stably equivalent to
  $\mb{P}_{\iota_0}\amalg\mb{P}_{\iota_1}$, where $\iota_i$ is the
  face inclusion $\CCat{}\cong\{i\}\times\CCat{}\into\CCat{2}$. To see
  this, let $G$ be the following diagram on the cube
  $\vcenter{\xymatrix@R=0ex@C=0ex{
      111\ar@{-{*}}[rr]\ar@{-{*}}[dr]\ar@{-{*}}[dd]&& 110\ar@{-{*}}[dr] \ar@{-{*}}[dd]\\
      &101\ar@{-{*}}[rr]\ar@{-{*}}[dd]&&100\ar@{-{*}}[dd]\\
      011\ar@{-{*}}[dr]\ar@{-{*}}[rr]&&010\ar@{-{*}}[dr]\\
      &001\ar@{-{*}}[rr]&&000 }}$:
\[
\xymatrix@C=0.1ex@R=0.5ex@M=0ex{
\circ\ar@{-}[rrrrrr]\ar@{-}[dddrrr]|2\ar@{-}[dddddd]\ar@{-}[ddddddr]|(0.7)2& && & && p_3\ar@{-}[dddrrr]|2\ar@{-}[dddddd]|(0.7)2\\
\txt{\phantom{X}}& &\txt{\phantom{XXX}}& \txt{\phantom{X}}& &\txt{\phantom{XXX}}&
\txt{\phantom{X}}& &\txt{\phantom{XXX}}& \txt{\phantom{X}}\\
\txt{\phantom{X}}\\
& && \circ\ar@{-}[rrrrrr]\ar@{-}[dddddd]\ar@{-}[ddddddr]& && & &&p_4\ar@{-}[dddddd]|2\\
\txt{\phantom{X}}\\
\txt{\phantom{X}}\\
p_5\ar@{-}[dddrrr]|2&\circ\ar@{-}[rrrrr]\ar@{-}[dddrrr] && & &&p_1\ar@{-}[dddrrr]|2\\
\txt{\phantom{X}}\\
\txt{\phantom{X}}\\
& && p_6&\circ\ar@{-}[rrrrr]|2 && & &&p_2.
}
\]
(We have only labeled some of the elements of some of the sets
$G(v)$. We have not labeled the elements of the correspondences
$G(\cmorph{u}{v})$, but merely indicated their cardinalities if bigger
than one.) Define the matching $G_{110,100,010,000}$ on the rightmost
face to be isomorphic to the matching $F^{(2)}_{11,10,01,00}$
above. It is straightforward to verify that we can construct matchings
on the other two-dimensional faces in a unique fashion so that
condition~\ref{condition:hexagon} from \Section{functor} is
satisfied. Therefore, this defines a functor
$G\from\CCat{3}\to\BurnsideCat$.

Let $F$ be the diagram restricted to the objects $\{p_1,p_2,p_3,p_4\}$
and the correspondences between them, and let $F'$ be the diagram
restricted to the objects $\{p_1,p_2,p_5,p_6\}$ and the
correspondences between them. It is easy to verify that both the
inclusions $F\to G$ and $F'\to G$ are natural transformations that
induce chain homotopy equivalences $\Total(F)\to \Total(G)$ and
$\Total(F')\to \Total(G)$. Furthermore, $F$ is naturally isomorphic to
$F^{(2)}_{\iota}$ where $\iota$ is the face inclusion
$\CCat{2}\cong\CCat{2}\times\{0\}\into\CCat{3}$; and $F'$ is naturally
isomorphic to $(\mb{P}_{\iota_0}\amalg\mb{P}_{\iota_1})_{\iota'}$
where $\iota'$ is the face inclusion
$\CCat{2}\cong\{0\}\times\CCat{2}\into\CCat{3}$. Therefore, $F^{(2)}$
is stably equivalent to $\mb{P}_{\iota_0}\amalg\mb{P}_{\iota_1}$.
\end{example}

\begin{remark}\label{rem:above-two-diff}
  Properties of the topological realizations
  from \Section{spaces} imply that the two functors
  $F^{(1)}$ and $F^{(2)}$ from
  Examples~\ref{exam:rp2-smash-rp2}--\ref{exam:rp2-wedge-rp2} are not
  stably equivalent. The cohomology of $\Realize{\mb{P}}$ is
  $\F_2$, supported in grading zero (by
  property~\ref{item:property-objects}\ref{item:homolo-ident}), and
  therefore, the spectrum $\Realize{\mb{P}}$ is homotopy equivalent to
  $\Sigma^{-1}\mb{R}\mb{P}^2$. Since $F^{(1)}$ is naturally isomorphic
  to $\mb{P}\times\mb{P}$, the spectrum $\Realize{F^{(1)}}$ is
  homotopy equivalent to
  $\Sigma^{-2}(\mb{R}\mb{P}^2\wedge\mb{R}\mb{P}^2)$ (by
  property~\ref{item:prod-to-prod}). On the other hand, since
  $F^{(2)}$ is stably equivalent to
  $\mb{P}_{\iota_0}\amalg\mb{P}_{\iota_1}$, the spectrum
  $\Realize{F^{(2)}}$ is homotopy equivalent to
  $(\Sigma^{-1}\mb{R}\mb{P}^2)\vee\mb{R}\mb{P}^2$ (by
  properties~\ref{item:coprod-to-coprod},~\ref{item:suspend-equiv},
  and~\ref{item:equiv-equiv}). However, the spectra
  $\Sigma^{-2}(\mb{R}\mb{P}^2\wedge\mb{R}\mb{P}^2)$ and
  $(\Sigma^{-1}\mb{R}\mb{P}^2)\vee\mb{R}\mb{P}^2$ are not homotopy
  equivalent (being distinguished by the Steenrod square $\Sq^2$ for
  instance), and therefore, (by property~\ref{item:equiv-equiv} once
  again) the diagrams $F^{(1)}$ and $F^{(2)}$ are not stably
  equivalent.
\end{remark}

\section{The Khovanov functor}\label{sec:khovanov-functor}
We turn now to the refinement of Khovanov homology. Fix an oriented
link diagram $K$ with $n$ crossings and an ordering of the crossings
of $K$. Khovanov associated an $n$-dimensional cube of free Abelian
groups to this data, as follows~\cite{Kho-kh-categorification}. Each
crossing of $K$ has a $0$-resolution and a $1$-resolution:
\[
\vcenter{\xymatrix{\ar@{-}@/^2.5ex/[d]&\ar@{-}@/_2.5ex/[d]\\&}}\stackrel{0}{\longleftarrow}\vcenter{\xymatrix{\ar@{-}[dr]|-\hole&\ar@{-}[dl]\\&}}\stackrel{1}{\longrightarrow}\vcenter{\xymatrix{\ar@{-}@/_2.5ex/[r]&\\\ar@{-}@/^2.5ex/[r]&}}.
\]
So, given a vertex $v=(v_1,\dots,v_n)\in\{0,1\}^n$ of the cube,
performing the $v_i$-resolution at the $i\th$ crossing gives a
collection $K_v$ of disjoint, embedded circles in $S^2$. Further, for
each edge $\xymatrix@C=1.5ex{u\ar@{-{*}}[r]&v}$ of the cube, $K_u$ is
obtained from $K_v$ by either merging two circles into one or
splitting one circle into two. Let $V=\ZZ\langle x_+, x_- \rangle$ be
a free, rank-$2$ $\ZZ$-module, which we endow with a multiplication
and comultiplication by
\[
m(x_+\otimes x_+)=x_+ \qquad m(x_+\otimes x_-)=x_- \qquad m(x_-\otimes x_+)=x_- \qquad m(x_-\otimes x_-)=0
\]
\[
\Delta(x_+)=x_+\otimes x_-+x_-\otimes x_+ \qquad\qquad \Delta(x_-)=x_-\otimes x_-.
\]
Define a functor $\KhAbFunc\co (\CCat{n})^\op\to
\AbelianGroups$ on objects by setting $\KhAbFunc(v)=\bigotimes_{S\in
  \pi_0(K_v)}V$. On morphisms, if $\xymatrix@C=1.5ex{u\ar@{-{*}}[r]&v}$ is an edge
of the cube so that $K_u$ is obtained from $K_v$ by merging two
circles then $\KhAbFunc(\cmorph{u}{v}^\op)$ applies the multiplication
map to the corresponding factors of $\KhAbFunc(v)$ and the identity
map to the remaining factors; if instead $K_u$ is obtained from $K_v$
by splitting one circle then $\KhAbFunc(\cmorph{u}{v}^\op)$ applies
the comultiplication map to the corresponding factor of $\KhAbFunc(v)$
and the identity map to the remaining factors. It is straightforward
to verify that the resulting diagram commutes. The total complex of
this cube (i.e., multiplying the map on the edge
$\xymatrix@C=1.5ex{u\ar@{-{*}}[r]&v}$ by $(-1)^{s_{u,v}}$ and summing
over vertices of each grading, in a fashion similar to
\Definition{totalization}) is the Khovanov complex $\KhCx(K)$.

The Khovanov cube decomposes as a direct sum over \emph{quantum gradings}. To
define the quantum grading, notice that for $v\in\{0,1\}^n$, $\KhAbFunc(v)$ comes with
a preferred basis: the labelings of the circles in $K_v$ by elements
of $\{x_-,x_+\}$, i.e., functions $\pi_0(K_v)\to \{x_+,x_-\}$.  The
quantum grading of a basis element is
\[
\gr_q(v,x\co \pi_0(K_v)\to \{x_+,x_-\})=n_+-2n_-+\card{v}+\card{\Set{Z}{x(Z)=x_+}}-\card{\Set{Z}{x(Z)=x_-}}\in\ZZ,
\]
where $n_+$ and $n_-$ are the number of positive and negative
crossings of $K$, respectively. There is also a homological
grading shift of $-n_-$.

Our main goal is to refine $\KhAbFunc$ to a functor
$\KhFunc\from\CCat{n}\to\BurnsideCat$, satisfying the following:
\begin{enumerate}[leftmargin=*,label=(\alph*),ref=(\alph*)]
\item For all $v$, $\KhFunc(v)=\{x\co \pi_0(K_v)\to
  \{x_+,x_-\}\}$ is the preferred basis of Khovanov generators.
\item The above identification induces an isomorphism $\Sigma^{-n_-}\Total(\KhFunc)\cong\Dual{\KhCx}.$
\end{enumerate}
By \Proposition{redefine} it suffices to define $\KhFunc$ on vertices,
edges, and $2$-dimensional faces. For all vertices $v$, $F_{\Kh}(v)$
is already defined. Next, notice that for each edge
$\xymatrix@C=1.5ex{u\ar@{-{*}}[r]&v}$ and each element $y\in
F_{\Kh}(v)$, $\KhAbFunc(\cmorph{u}{v}^\op)(y)=\sum_{x\in
  F_{\Kh}(u)}\epsilon_{x,y}\, x$ where $\epsilon_{x,y}\in\{0, 1\}$. (In
other words, all of the entries of the matrix
$\KhAbFunc(\cmorph{u}{v}^\op)$ are $0$ or $1$.) Define
$F_\Kh(\cmorph{u}{v})=\Set{(y,x)\in F_{\Kh}(v)\times
  F_{\Kh}(u)}{\epsilon_{x,y}=1}$, with the obvious source and target
maps to $F_{\Kh}(u)$ and $F_{\Kh}(v)$.

So far, there is no information in $F_{\Kh}$ beyond that in the
Khovanov complex. The new information is in the definition of
$F_{u,v,v',w}\co F_{\Kh}(\cmorph{v}{w})\times_{F_{\Kh}(v)}
F_{\Kh}(\cmorph{u}{v})\to F_{\Kh}(\cmorph{v'}{w})\times_{F(v')}
F_{\Kh}(\cmorph{u}{v'})$ for the $2$-dimensional faces
$\vcenter{\xymatrix@R=0ex@C=1ex{&v\ar@{-{*}}[dr]\\u\ar@{-{*}}[ur]\ar@{-{*}}[dr]&&w\\&v'\ar@{-{*}}[ur]}}$.
The fact that $\KhAbFunc$ is a commutative diagram implies that there
is a $2$-isomorphism between
$F_{\Kh}(\cmorph{v}{w})\times_{F_{\Kh}(v)} F_{\Kh}(\cmorph{u}{v})$ and
$F_{\Kh}(\cmorph{v'}{w})\times_{F_{\Kh}(v')}
F_{\Kh}(\cmorph{u}{v'})$. Namely,
for $x\in F_{\Kh}(u)$ and $z\in F_{\Kh}(w)$, the cardinalities of
\begin{align*}
A_{x,z}&\coloneqq s^{-1}(x)\cap t^{-1}(z)\subseteq
F_{\Kh}(\cmorph{v}{w})\times_{F_{\Kh}(v)}
F_{\Kh}(\cmorph{u}{v}) \qquad \text{and}\\
A'_{x,z}&\coloneqq s^{-1}(x)\cap t^{-1}(z)\subseteq
F_{\Kh}(\cmorph{v'}{w})\times_{F_{\Kh}(v)}
F_{\Kh}(\cmorph{u}{v'})
\end{align*}
are the $(x,z)$ entries in the matrices
$\KhAbFunc(\cmorph{u}{v}^\op)\circ\KhAbFunc(\cmorph{v}{w}^\op)$ and
$ \KhAbFunc(\cmorph{u}{v'}^\op)\circ\KhAbFunc(\cmorph{v'}{w}^\op)$,
respectively, and these two matrices are the same. Further, in most
instances, these sets have $0$ or $1$ elements, so there is a unique
isomorphism $F_{u,v,v',w}|_{A_{x,z}}\co A_{x,z}\to A'_{x,z}$.

\captionsetup{width=0.9\textwidth}
\begin{figure}
  \centering
  \begin{overpic}[tics=5]{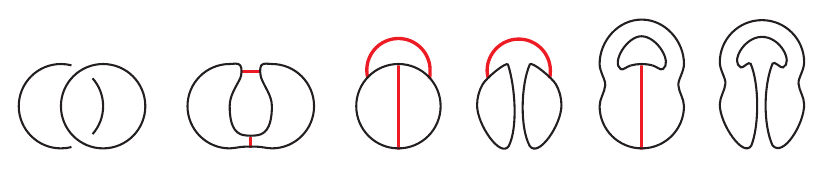}
    \put(43,6){\tiny $1$}
    \put(57.75,6){\tiny $1$}
    \put(72.75,6){\tiny $1$}
    \put(49.5,13.5){\tiny $2$}
    \put(64.25,13.5){\tiny $2$}
    \put(78.5,13.5){\tiny $2$}
    \put(8,0){(a)}
    \put(29,0){(b)}
    \put(46.75,0){(c)}
    \put(61.5,0){(d)}
    \put(76.5,0){(e)}
    \put(91,0){(f)}
  \end{overpic}
  \caption{\textbf{The ladybug matching.} (a) A piece of a (perhaps)
    partially-resolved link diagram. (b) The $00$-resolution, along
    with the arcs $a_v$ and $a_{v'}$, drawn with thick lines. This is
    a ladybug configuration. (c) The same configuration, up to isotopy
    in $S^2$, looking more like a ladybug. The right pair of arcs is
    labeled $1$ and $2$. (d)--(e) The $10$ and $01$ resolutions. The
    circles in the $10$ resolution and the circles in the $01$
    resolution can be identified via the induced numbering by
    $1,2$. (f) The $11$ resolution.}
  \label{fig:ladybug}
\end{figure}

The exceptional case is when there is a circle $C_w$ in $K_w$ which
splits to form two circles in each of $K_v$ and $K_{v'}$, and these
same two circles merge to form a single circle $C_u$ in $K_u$; $x$
labels $C_u$ by $x_-$; and $z$ labels $C_w$ by $x_+$. We call this
configuration a \emph{ladybug configuration} because of the following
depiction. First, draw the circle $C_w$. Then, draw the arc $a_{v}$
with endpoints on $C_w$ which pinches along the edge
$\xymatrix@C=1.5ex{v\ar@{-{*}}[r]&w}$ (i.e., performing embedded
$1$-surgery on $K_w$ along $a_{v}$ produces $K_v$). Similarly, draw
another arc $a_{v'}$ with endpoints on $C_w$ which pinches along the
edge $\xymatrix@C=1.5ex{v'\ar@{-{*}}[r]&w}$. Up to isotopy in $S^2$,
the result looks like the ladybug in Figure~\ref{fig:ladybug}. Now,
distinguish a pair of arcs in $(C_w,\bdy a_v\cup \bdy a_{v'})$, the
\emph{right pair}, as the two arcs you get to by walking along $a_v$
or $a_{v'}$ to $C_w$ and then turning right. (Again, see
Figure~\ref{fig:ladybug}.) Number the two arcs in the right pair as
$1,2$---the numbering will not matter. Label the two circles in $K_v$
which come from $C_w$ as $C_{v}^{1}$ and $C_{v}^{2}$, according to
whether they contain the right arc numbered $1$ or $2$. Similarly,
label the two circles in $K_{v'}$ which come from $C_w$ as $C_{v'}^1$
and $C_{v'}^{2}$, according to whether they contain the right arc
numbered $1$ or $2$. The two elements of $A_{x,z}$ are
\[
  \alpha=((C_w\to x_+), (C_v^{1},C_v^{2})\to (x_-,x_+), (C_u\to x_-)),\ 
  \beta=((C_w\to x_+), (C_v^{1},C_v^{2})\to (x_+,x_-), (C_u\to x_-))
\]
while the two elements of $A'_{x,z}$ are
\[
  \alpha'=((C_w\to x_+), (C_{v'}^{1},C_{v'}^{2})\to (x_-,x_+), (C_u\to x_-)),\ 
  \beta'=((C_w\to x_+), (C_{v'}^{1},C_{v'}^{2})\to (x_+,x_-), (C_u\to x_-))
\]
The bijection $F_{u,v,v',w}$ sends $\alpha$ to $\alpha'$ and $\beta$
to $\beta'$.  (See also~\cite[Section 5.4]{RS-khovanov}.)

\begin{proposition}\label{prop:Kh-burnside-defined}
  The definitions above satisfy
  conditions~\ref{condition:matching}--\ref{condition:hexagon}
  from \Section{functor}, and so induce a strictly unitary $2$-functor
  $\KhFunc\from\CCat{n}\to\BurnsideCat$ by \Proposition{redefine}.
\end{proposition}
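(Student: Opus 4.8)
The plan is to check that the data specified above---the sets $\KhFunc(v)$, the correspondences $\KhFunc(\cmorph{u}{v})$, and the face matchings $F_{u,v,v',w}$ including the ladybug matching---satisfy conditions~\ref{condition:matching}--\ref{condition:hexagon} of \Section{functor}, and then to invoke \Proposition{redefine}.

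Condition~\ref{condition:matching}, that $F_{u,v',v,w}=F_{u,v,v',w}^{-1}$, follows by inspection. Because $\KhAbFunc$ is a commutative diagram, the matrices $\KhAbFunc(\cmorph{u}{v}^\op)\circ\KhAbFunc(\cmorph{v}{w}^\op)$ and $\KhAbFunc(\cmorph{u}{v'}^\op)\circ\KhAbFunc(\cmorph{v'}{w}^\op)$ agree, so $\card{A_{x,z}}=\card{A'_{x,z}}$ for all $x\in\KhFunc(u)$ and $z\in\KhFunc(w)$; outside a ladybug configuration this common value is $0$ or $1$, so the bijection $A_{x,z}\to A'_{x,z}$ is the unique one and interchanging the roles of $v$ and $v'$ returns its inverse. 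Inside a ladybug configuration the assignment $\alpha\mapsto\alpha'$, $\beta\mapsto\beta'$ uses only the circle $C_w$, the arcs $a_v,a_{v'}$, and the ``right pair'' of arcs, none of which depends on an ordering of $v$ and $v'$; so swapping $v$ and $v'$ again produces the inverse bijection.

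The content is condition~\ref{condition:hexagon}. Fix a three-dimensional face of the cube with bottom vertex $z$ and top vertex $u$, so that the three coordinates in which $u$ and $z$ differ record three merge/split changes of the resolution; only the circles of $K_z$ touched by one of these changes, together with the relevant surgery arcs, enter the hexagon, so there are finitely many combinatorial types to consider. Commutativity of the diagram $\KhAbFunc$ forces the $(x,\zeta)$-components of the two triple composites around the hexagon to have equal cardinality for every $x\in\KhFunc(u)$ and $\zeta\in\KhFunc(z)$, and---since the entries of the multiplication, comultiplication and identity matrices lie in $\{0,1\}$---every such cardinality is $0$ or $1$ unless one of the three edge-compositions met along the way is a ladybug composition for the relevant intermediate labels, which forces a ladybug $2$-face on the given three-dimensional face. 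On every component of cardinality $\le 1$ both routes around the hexagon are forced and therefore agree, so it remains to check the hexagon on components passing through a ladybug $2$-face.

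So the crux is to verify the hexagon when the three-dimensional face contains a ladybug $2$-face. I would split this into two subcases according to whether the third change---the merge or split not occurring in that ladybug face---is supported away from the circle $C_w$ and the two circles into which it splits, or meets one of them. In the first subcase the three edges in the third coordinate direction contribute only forced bijections, and the hexagon collapses to the single ladybug matching pre- and post-composed with canonical maps applied to literally the same circles and arcs at each stage, so it commutes. In the second subcase the third change is pinned into one of a short list of configurations---roughly, it either splits the merged circle $C_u$, merges one of the two children of $C_w$ with an outside circle, or produces a second ladybug $2$-face---and one checks the hexagon directly in each; the key point, exactly as in~\cite[Section~5.4]{RS-khovanov}, is that the ``turn right'' convention picks out pairs of arcs that remain compatible across the third change, so the two ladybug matchings occurring in a single three-dimensional face are coherent (contrast the failure for arbitrary data exhibited in \Example{zero-extend}). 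This last coherence is the main obstacle; everything else is either formal or a bounded enumeration. Once conditions~\ref{condition:matching}--\ref{condition:hexagon} are verified, \Proposition{redefine} furnishes the strictly unitary $2$-functor $\KhFunc\from\CCat{n}\to\BurnsideCat$.
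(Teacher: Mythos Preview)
Your proposal is correct and follows essentially the same route as the paper's sketch: condition~\ref{condition:matching} is immediate, and for condition~\ref{condition:hexagon} you reduce to three-dimensional faces containing a ladybug two-face and then case-check, deferring the detailed verification to~\cite{RS-khovanov}. Your organization of the case analysis (splitting on whether the third change is disjoint from the ladybug circles) differs slightly from the paper's (which uses symmetries to reduce to four explicit configurations from~\cite[Figure~5.3]{RS-khovanov}), but this is only a matter of bookkeeping, not substance.
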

\begin{proof}[Sketch of Proof]
  The proof is essentially the same as the proof of~\cite[Proposition
  5.19]{RS-khovanov} and, indeed, the result follows
  from~\cite[Proposition 5.20]{RS-khovanov} and~\cite[Lemma
  4.2]{LLS-khovanov-product}, so we will only sketch the argument
  here.

  Condition~\ref{condition:matching} is immediate, so we only need to
  check condition~\ref{condition:hexagon}.  Fix a three-dimensional
  face
  $\vcenter{\xymatrix@R=0.8ex@C=1.5ex{&v_1\ar@{-{*}}[r]\ar@{-{*}}[dr]&
      v_2''\ar@{-{*}}[dr]\\u\ar@{-{*}}[ur]\ar@{-{*}}[r]
      \ar@{-{*}}[dr]&v_1'\ar@{-{*}}[ur]\ar@{-{*}}[dr]&
      v_2'\ar@{-{*}}[r]&w\\&v_1''\ar@{-{*}}[r]
      \ar@{-{*}}[ur]&v_2\ar@{-{*}}[ur]}}$. We have six correspondences
  from $\KhFunc(u)$ to $\KhFunc(w)$ coming from the six maximal chains
  from $u$ to $w$. Fix Khovanov generators $x\in\KhFunc(u)$ and
  $z\in\KhFunc(w)$, and consider the subsets $s^{-1}(x)\cap t^{-1}(z)$
  of these six correspondences. The various coherence maps
  $F_{u,v_1^{(i)},v_2^{(j)}}$ and $F_{v_1^{(i)},v_2^{(j)},w}$ produce
  six bijections connecting these six sets, and we need to check that
  these bijections agree.

  Unless one of the two-dimensional faces of the above cube is a
  ladybug configuration, each of the above six sets will contain $0$
  or $1$ element, and the check is vacuous or trivial,
  respectively. So we concentrate on configurations that contain
  ladybugs, and check them by hand. Using some underlying symmetries,
  we can reduce to just four configurations
  \cite[Figure~5.3.~a--c,~e]{RS-khovanov}. The first three are
  similar, so there are essentially two case checks that need to be
  performed; and the proofs of \cite[Lemmas~5.14,~5.17]{RS-khovanov}
  imply that the checks succeed.
\end{proof}

Hu-Kriz-Kriz gave an intrinsic description of the functor
$F_{\Kh}$~\cite{HKK-Kh-htpy}, that does not require
\Proposition{redefine}, as follows. On objects, $\HKKfun$ agrees with
$\KhFunc$. To define $\HKKfun$ on morphisms, first fix a checkerboard
coloring for $K$. There is an induced checkerboard coloring of each
resolution $K_v$. Further, each morphism $\cmorph{u}{v}$ corresponds
to a cobordism $\Sigma_{u,v}$ in $[0,1]\times S^2$ from $K_u$ to
$K_v$, and the checkerboard coloring of $K$ induces a coloring of the
components of $([0,1]\times S^2)\setminus \Sigma_{u,v}$. Let
$B_{u,v}\subset ([0,1]\times S^2)\setminus \Sigma_{u,v}$ denote the
closure of the black region. Then 
\begin{equation}\label{eq:HKK-group}
H_1(B_{u,v})/H_1(B_{u,v}\cap\{0,1\}\times S^2)\cong \bigoplus_{\Sigma_0\in \pi_0(\Sigma_{u,v})}\ZZ^{g(\Sigma_0)}.
\end{equation}
If each connected component of $\Sigma_{u,v}$ has genus $0$ or $1$ then we
call a vector $(\epsilon_1,\dots,\epsilon_k)$ in the
group~\eqref{eq:HKK-group} such that each $\epsilon_i=\pm1$ a \emph{sign
  choice} for $\Sigma_{u,v}$. By a \emph{valid boundary labeling} of
$\Sigma_{u,v}$ we mean a labeling of the circles in $\bdy \Sigma_{u,v}$ by $x_+$
or $x_-$ satisfying the condition that every component $\Sigma_0$ of
the cobordism has genus equal to
\[
1-\card{\Set{C\in \pi_0(K_v\cap\bdy \Sigma_0)}{x(C)=x_-}}-\card{\Set{C\in \pi_0(K_u\cap\bdy \Sigma_0)}{x(C)=x_+}}.
\]
(In particular, if some component $\Sigma_0$ has genus bigger than $1$
then the set of valid boundary labelings is empty.) Then
$\HKKfun(\cmorph{u}{v})$ is the product of the set of valid boundary
labelings for $\Sigma_{u,v}$ and the set of sign choices for
$\Sigma_{u,v}$.  The source and target maps for
$\HKKfun(\cmorph{u}{v})$ are given by restricting the boundary
labelings of $\Sigma_{u,v}$ to $K_u$ and $K_v$, respectively. The
coherence map
$\HKKfun(\cmorph{v}{w})\times_{\HKKfun(v)}\HKKfun(\cmorph{u}{v})\to
\HKKfun(\cmorph{u}{w})$ is obvious except when a genus $0$ component
$\Sigma_0$ of $\HKKfun(\cmorph{u}{v})$ and a genus $0$ component
$\Sigma_1$ of $\HKKfun(\cmorph{v}{w})$ glue to give a genus $1$
component $\Sigma$ of $\HKKfun(\cmorph{u}{w})$. In this last case,
there is a single circle $C$ of $\Sigma_0\cap \Sigma_1$, that is
non-separating in $\Sigma$ and is labeled $x_+$. Orient $C$ and
$\Sigma$ as the boundaries of the black regions in $S^2$ and
$[0,1]\times S^2$, respectively, and choose a circle $D$ in
$\Sigma_{u,w}$ with intersection number $D\cdot C=1$. Either the
pushoff of $C$ or the pushoff of $D$ into $B_{u,w}$ is a generator of
the new $\ZZ$-summand of~\eqref{eq:HKK-group}; use this generator to
extend the sign choice to $\Sigma_{u,w}$. This finishes the
construction of $\HKKfun$, and verifying that this does, in fact,
define a 2-functor is straightforward. The functor $\HKKfun$ is
naturally isomorphic to the functor $F_{\Kh}$ defined via the ladybug
matching~\cite[Lemma 8.1]{LLS-khovanov-product}.

\begin{definition}\label{def:khovanov-functor}
  Associated to an oriented $n$-crossing link diagram $K$, let
  $\KhFunc(K)\from\CCat{n}\to\BurnsideCat$ be the functor constructed
  in \Proposition{Kh-burnside-defined}. Define the \emph{Khovanov
    functor} to be the stable functor $\Sigma^{-n_-}\KhFunc(K)$, where
  $n_-$ is the number of negative crossings in $K$.
\end{definition}

There is also a reduced Khovanov functor associated to a pointed link
$(K,p)$: the basepoint $p$ chooses a preferred circle $C_{v,p}$ in
each resolution $K_v$, and we declare that
\begin{align*}
F_{\rKh}(v)&=\{x\in F_{\Kh}(v)\mid x(C_{v,p})=x_-\},\\
F_{\rKh}(\cmorph{u}{v})&=s^{-1}(F_{\rKh}(u))\cap t^{-1}(F_{\rKh}(v))\subseteq F_{\Kh}(\cmorph{u}{v}),
\end{align*}
and similarly the coherence maps $(F_{\rKh})_{u,v,w}$ are restrictions
of the coherence maps $(F_{\Kh})_{u,v,w}$ for $F_{\Kh}$.  It is
straightforward to see that $F_{\rKh}$ does define a strictly unitary
$2$-functor. (Replacing $x_-$ with $x_+$ in the definition would give
a naturally isomorphic functor $F'_{\rKh}$.)
\begin{definition}
  Let $F_{\rKh}(K,p)\from\CCat{n}\to\BurnsideCat$ be the above functor
  associated to a pointed, oriented $n$-crossing link diagram
  $K$. Define the \emph{reduced Khovanov functor} to be the stable
  functor $\Sigma^{-n_-}F_{\rKh}(K)$, where $n_-$ is the number of
  negative crossings in $K$.
\end{definition}

Since the chain complexes $\KhCx$ and $\rKhCx$ decompose according to
quantum gradings, so do the functors $F_{\Kh}$ and $F_{\rKh}$:
\[
F_{\Kh}=\coprod_{j\in\ZZ}F_{\Kh}^j \qquad\qquad F_{\rKh}=\coprod_{j\in\ZZ}F_{\rKh}^j,
\]
where $F_{\Kh}^j(v)=\{x\in F_{\Kh}(v)\mid \gr_q(v,x)=j\}$, and
$F_{\rKh}^j(v)=F_{\rKh}(v)\cap F_{\Kh}^{j-1}(v)$.

\begin{theorem}\label{thm:main}
  If $K_1$ and $K_2$ are oriented link diagrams, with $n_-^1$ and
  $n_-^2$ negative crossings, respectively, representing isotopic
  oriented links then the Khovanov functors
  $\Sigma^{-n^1_-}F_{\Kh}^j(K_1)$ and $\Sigma^{-n^2_-}F_{\Kh}^j(K_2)$
  are stably equivalent stable functors.  Similarly, if $(K_1,p_1)$
  and $(K_2,p_2)$ are pointed, oriented link diagrams, with $n_-^1$ and
  $n_-^2$ negative crossings, respectively, representing isotopic
  pointed, oriented links then the reduced Khovanov functors
  $\Sigma^{-n_-^1}F_{\rKh}^j(K_1,p_1)$ and
  $\Sigma^{-n_-^2}F_{\rKh}^j(K_2,p_2)$ are stably equivalent stable
  functors.
\end{theorem}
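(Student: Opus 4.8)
The plan is to follow the classical Reidemeister-move strategy. By the standard fact that any two diagrams representing isotopic (pointed, oriented) links are connected by a finite sequence of planar isotopies and (pointed) Reidemeister moves R1, R2, R3 (with oriented variants), it suffices to produce, for each such move relating $K_1$ to $K_2$, a stable equivalence between $\Sigma^{-n_-^1}\KhFunc^j(K_1)$ and $\Sigma^{-n_-^2}\KhFunc^j(K_2)$ in the sense of \Definition{equiv-functors}. Planar isotopies change nothing. Crucially, since stable equivalence is an equivalence relation that is compatible with the quantum-grading decomposition (each of the relations in \Definition{equiv-functors} may be applied summand-by-summand in $j$), it is enough to work one move at a time and one quantum grading at a time, and the reduced case follows by restricting to the summand fixed by the basepoint circle throughout, exactly as in the unreduced case.

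For each Reidemeister move I would first set up a common ambient cube. If $K_1$ has $n_1$ crossings and $K_2$ has $n_2$ crossings, and the two diagrams agree outside a disk, then there is a face inclusion expressing the ``unchanged'' crossings as a subcube; using the face-inclusion relation \Item{2} of \Definition{equiv-functors} I would pad both $\KhFunc(K_1)$ and $\KhFunc(K_2)$ up to a single cube $\CCat{N}$ with $N$ large enough to contain the crossings of both diagrams, absorbing the grading shifts $\card{\iota}$ into the $\Sigma^r$ (and checking they exactly account for the difference between $n_-^1$ and $n_-^2$, which is $0$ for R1$^+$ versus $1$ for R1$^-$, etc.). It then remains, for each move, to build a zig-zag of natural transformations inducing chain homotopy equivalences on totalizations, i.e.\ relation \Item{1}. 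Here I would use the intrinsic Hu--Kriz--Kriz description $\HKKfun$ recalled after \Proposition{Kh-burnside-defined}, since it makes the cobordism bookkeeping transparent: a Reidemeister move corresponds to a local modification of the cube of resolutions, and the required natural transformations $\CCat{N+1}\to\BurnsideCat$ are themselves cubes of correspondences built from the cobordism pictures of the move, with the coherence $2$-isomorphisms determined (as in \Example{delta-cx-functor}, by uniqueness) except at ladybug configurations, where the ladybug matching as in \Figure{ladybug} pins them down.

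Concretely, for R1 one exhibits a natural transformation from $\KhFunc(K_2)_\iota$ to $\KhFunc(K_1)$ (or vice versa) that on totalizations is the usual Gaussian-elimination/cancellation map killing an acyclic direct summand coming from the new crossing; one checks via \Definition{natural-transform} that the mapping cone is acyclic, hence the map is a chain homotopy equivalence, so relation \Item{1} applies. R2 is analogous but cancels a two-step acyclic subcomplex, and one must verify the coherence conditions \Condition{matching}--\Condition{hexagon} on the $(N+1)$-cube defining the natural transformation, which by \Proposition{redefine} reduces to two-dimensional faces and three-dimensional faces; the only nontrivial checks are the ladybug faces, and these are handled exactly as in the proof sketch of \Proposition{Kh-burnside-defined}, citing \cite[Lemmas~5.14,~5.17]{RS-khovanov} and \cite[Lemma~4.2]{LLS-khovanov-product}. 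R3 is the subtle move: there one typically does not get a single natural transformation but a zig-zag $F_1 \leftarrow G \to F_2$ through an auxiliary functor on a larger cube (the ``movie'' of the braid-like isotopy), each leg inducing a chain homotopy equivalence on totalization; the $G$ is assembled from the resolution cube of an intermediate diagram together with the R2-type cancellations on either side.

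The main obstacle, as in all Khovanov-homotopy invariance arguments, is R3: verifying that the ladybug matchings on all the two-dimensional faces of the enlarged cube defining $G$ (and the natural transformations into it) are compatible, i.e.\ that \Condition{hexagon} holds on every three-dimensional sub-face. This is precisely the content that was checked ``by hand'' in \cite[Section~5]{RS-khovanov} (reducing by symmetry to finitely many ladybug configurations) and re-derived functorially via $\HKKfun$ in \cite{HKK-Kh-htpy,LLS-khovanov-product}; so rather than redoing those computations I would invoke those results, stating only that the natural transformations constructed here induce, on totalization, the standard chain homotopy equivalences witnessing invariance of $\KhCx(K)$ under R3 (tensored with the identity on the unchanged factors), and that the coherence data is the ladybug data, hence well-defined by \Proposition{redefine}. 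Finally, assembling the per-move zig-zags into a single stable equivalence is automatic since stable equivalence is transitive by \Definition{equiv-functors}, and the total grading shift telescopes to $\Sigma^{-n_-^1}$ versus $\Sigma^{-n_-^2}$ as required. The reduced statement follows mutatis mutandis, restricting every set and correspondence above to the sub-functor $F_{\rKh}$ determined by the basepoint circle, which is preserved by all the local modifications since Reidemeister moves can be performed away from the basepoint.
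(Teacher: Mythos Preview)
Your overall strategy---reduce to Reidemeister moves, use face inclusions for the grading bookkeeping, and produce natural transformations inducing chain homotopy equivalences---matches the paper's. The execution differs, however, and the paper's is considerably simpler. The paper isolates a ``general principle'': whenever a subset $S$ of the Khovanov generators spans a subcomplex (respectively, has complement spanning a subcomplex) of $\Dual{\KhCx(K)}$, restricting $F_{\Kh}$ to $S$ automatically yields a sub-functor $F_{\Kh}^S\co\CCat{n}\to\BurnsideCat$ together with a natural transformation to (respectively, from) $F_{\Kh}(K)$ given by plain set-inclusion (respectively, projection). No coherence conditions on the enlarged cube need rechecking---they are inherited from those of $F_{\Kh}(K)$. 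So invariance at the Burnside level reduces to the purely chain-level statement that the standard Reidemeister equivalences factor through such generator-spanned sub/quotient complexes and face inclusions. For R1 and R2 this is exactly Bar-Natan's formulation of Khovanov's proof. Your alternative---building the natural transformations from the Reidemeister cobordisms via the Hu--Kriz--Kriz recipe and then verifying \ref{condition:matching}--\ref{condition:hexagon} on the $(N{+}1)$-cube---could in principle be made to work, but it is more labor, and your citations for those coherence checks are off: \cite[Section~5]{RS-khovanov} and \cite[Lemma~4.2]{LLS-khovanov-product} establish that $F_{\Kh}$ itself is well-defined (\Proposition{Kh-burnside-defined}), not that your proposed Reidemeister natural transformations are.

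For R3 the difference is sharper. The paper does not attempt your direct zig-zag through an auxiliary functor $G$; instead it replaces R3 by the braid-like move $\sigma_1\sigma_2\sigma_1\sigma_2^{-1}\sigma_1^{-1}\sigma_2^{-1}\sim\Id$, which can be reduced to the identity by a (somewhat tedious) sequence of generator-spanned subcomplex inclusions and quotient projections~\cite[Proposition~6.4]{RS-khovanov}, so the same general principle applies throughout and no new coherence checks arise. In your outline, by contrast, the functor $G$ and the two legs $F_1\leftarrow G\to F_2$ are never actually specified, and specifying them correctly is the entire content of the R3 case; ``assembled from the resolution cube of an intermediate diagram together with R2-type cancellations'' is not yet a construction, and the ladybug case analysis you invoke does not address it.
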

\begin{proof}[Sketch of Proof]
  The proof is essentially the same as the proof of invariance of the
  Khovanov spectrum~\cite[Theorem 1.1]{RS-khovanov}, so we only give a
  sketch. We start with a general principle. Since $\KhCx(K)$ comes
  with a preferred basis, so does the dual complex $\Dual{\KhCx(K)}$.
  Suppose that $S$ is a subset of the preferred basis for
  $\Dual{\KhCx(K)}$ so that the span of $S$ is a subcomplex of
  $\Dual{\KhCx(K)}$. Then there is a functor $F_{\Kh}^S\co \CCat{n}\to
  \BurnsideCat$ defined by 
  \begin{equation}\label{eq:F-subcx}
    \begin{split}
      F_{\Kh}^S(v)&=S\cap F_{\Kh}(v),\\
      F_{\Kh}^S(\cmorph{u}{v})&=s^{-1}(F_{\Kh}^S(u))\cap t^{-1}(F_{\Kh}^S(v))\subseteq F_{\Kh}(\cmorph{u}{v}),
    \end{split}
  \end{equation}
  with coherence maps induced by the coherence maps for
  $F_{\Kh}(K)$. Further, inclusion induces a natural transformation
  $\eta\from F_{\Kh}^S\to F_{\Kh}(K)$, and if $\Span(S)\to
  \Dual{\KhCx(K)}$ is a quasi-isomorphism then, for any $m\in\ZZ$,
  $\eta$ is a stable equivalence between the stable functors $\Sigma^m
  F_{\Kh}^S$ and $\Sigma^m F_{\Kh}(K)$. Similarly, if $S$ is a subset
  of the preferred basis for $\Dual{\KhCx(K)}$ so that the complement
  of $S$ spans a subcomplex of $\Dual{\KhCx(K)}$ then
  Formula~\eqref{eq:F-subcx} still defines a functor $F_{\Kh}^S\co
  \CCat{n}\to \BurnsideCat$, and there is a natural transformation
  $\eta\from F_{\Kh}(K)\to F_{\Kh}^S$ defined as follows. Recall that
  a natural transformation is a functor $\eta\co
  \CCat{1}\times\CCat{n}\to\BurnsideCat$. Define
  $\eta(\cmorph{1}{0}\times\Id_{v})=F_{\Kh}^S(v)=S\cap F_{\Kh}(v)$,
  with source map given by the inclusion $F_{\Kh}^S(v)\into
  F_{\Kh}(v)$ and target map the identity map $F_{\Kh}^S(v)\to
  F_{\Kh}^S(v)$; the definition extends in an obvious way to all of
  $\CCat{1}\times\CCat{n}$. If the quotient map
  $\Dual{\KhCx(K)}\to\Span(S)$ is a quasi-isomorphism then $\eta$ is a
  stable equivalence between the stable functors $\Sigma^m F_{\Kh}(K)$
  and $\Sigma^m F_{\Kh}^S$.

  To prove the theorem, it suffices to check invariance under the
  three Reidemeister moves. (For pointed links, we need to check
  invariance under the Reidemeister moves in the complement of the
  basepoint.) By the previous paragraph, it suffices to show that the
  (duals of the) isomorphisms on Khovanov homology induced by
  Reidemeister moves come from sequences of:
  \begin{itemize}[leftmargin=*]
  \item Inclusions of subcomplexes spanned by Khovanov generators,
    inducing isomorphisms on homology.
  \item Projections to quotient complexes spanned by Khovanov generators,
    inducing isomorphisms on homology.
  \item Face inclusions of cubes.
  \end{itemize}
  For Reidemeister I and II moves, Bar-Natan's
  formulation~\cite{Bar-kh-khovanovs} of Khovanov's invariance
  proof~\cite{Kho-kh-categorification} has this form, so proves
  invariance of the stable functor as well (see
  also~\cite[Propositions 6.2 and 6.3]{RS-khovanov}). For Reidemeister
  III moves, it suffices to prove invariance under the braid-like
  Reidemeister III move, which locally has the form
  $\sigma_1\sigma_2\sigma_1\sigma_2^{-1}\sigma_1^{-1}\sigma_2^{-1}\sim
  \Id$ (where the $\sigma_i$ are braid generators)~\cite[Section
  7.3]{Baldwin-hf-s-seq}. For this braid-like Reidemeister III, one
  can again reduce to the identity braid by a sequence of subcomplex
  inclusions and quotient complex projections, though the sequence is
  somewhat tedious~\cite[Proposition 6.4]{RS-khovanov}. The result follows.
\end{proof}

The following properties are strightforward from the definitions. (See
\cite[Proposition 9.1]{LLS-khovanov-product}.)
\begin{enumerate}[label=(X-\arabic*),ref=(X-\arabic*)]
\item\label{item:disj-union-Kh} If $K_1$ and $K_2$ are oriented links then \[F_{\Kh}^j(K_1\amalg K_2)\cong
  \coprod_{j_1+j_2=j}F_{\Kh}^{j_1}(K_1)\times F_{\Kh}^{j_2}(K_2).\]
\item If $(K_1,p_1)$ is a pointed, oriented link, and $K_2$ is an
  oriented link then \[F_{\rKh}^j(K_1\amalg K_2,p_1)\cong
  \coprod_{j_1+j_2=j}F_{\rKh}^{j_1}(K_1,p_1)\times F_{\Kh}^{j_2}(K_2).\]
\item If $(K_1,p_1)$ and $(K_2,p_2)$ are pointed, oriented links and
  $(K_1\#K_2,p)$ is the connected sum of $K_1$ and $K_2$ at the
  basepoints and the new basepoint $p$ is chosen on one of the
  connected sum strands, then \[F_{\rKh}^j(K_1\#K_2,p)\cong
  \coprod_{j_1+j_2=j} F_{\rKh}^{j_1}(K_1,p_1)\times
  F_{\rKh}^{j_2}(K_2,p_2).\]
\end{enumerate}

 \Section{spaces} describes a recipe for turning a stable functor
 $\Sigma^r F$ into a spectrum $\Realize{\Sigma^r F}$. Applying that
 recipe to the Khovanov functor associated to a link $K$
 (respectively, reduced Khovanov functor associated to a pointed link
 $(K,p)$) gives the Khovanov stable homotopy type $\KhSpace(K)$
 (respectively, $\rKhSpace(K,p)$).

\section{Spaces}\label{sec:spaces}
Finally, we return to the connection with topological spaces. Given a
diagram $F\co\CCat{n}\to\BurnsideCat$, one can associate an
essentially well-defined spectrum $\Realize{F}$. To give a concrete
construction of $\Realize{F}$, following
\cite[Section~5]{LLS-khovanov-product}, start by fixing an integer
$\ell\geq n$. We build a diagram $G\co \CCat{n}\to\CW$ of based CW
complexes, as follows. Given a vertex $v$, let
\[
G(v)=(F(v)\times [0,1]^\ell)/(F(v)\times \bdy([0,1]^\ell))\simeq \bigvee_{x\in F(v)}S^\ell.
\]
For each $v>w$, choose an embedding
\[
\Phi(\cmorph{v}{w})\co F(\cmorph{v}{w})\times[0,1]^\ell\to F(v)\times[0,1]^\ell
\]
satisfying the following conditions:
\begin{enumerate}[leftmargin=*]
\item For each $a\in F(\cmorph{v}{w})$,
  $\Phi(\{a\}\times[0,1]^\ell)\subseteq \{s(a)\}\times[0,1]^\ell$; that
  is, the following diagram commutes:
  \[
  \xymatrix@C=8ex{
    F(\cmorph{v}{w})\times[0,1]^\ell \ar[d] \ar@{^(->}[r]^-{\Phi(\cmorph{v}{w})}&F(v)\times[0,1]^\ell\ar[d]\\
    F(\cmorph{v}{w})\ar[r]^-s& F(v).
    }
  \]
\item For each $a\in F(\cmorph{v}{w})$, the embedding
  $(\{a\}\times[0,1]^\ell)\into (\{s(a)\}\times[0,1]^\ell)$ is an
  inclusion as a sub-box; that is, the lower arrow in the following
  diagram
  \[
  \xymatrix{
    \{a\}\times[0,1]^\ell \ar[d]^-\cong \ar@{^(->}[r]&\{s(a)\}\times[0,1]^\ell\ar[d]_-\cong\\
    [0,1]^\ell\ar@{^(->}[r]& [0,1]^\ell
    }
  \]
  is a map of the form
  \[
  (x_1,\dots,x_\ell)\mapsto (c_1+d_1x_1,\dots,c_\ell+d_\ell x_\ell)
  \]
  for some non-negative constants $c_1,d_1,\dots,c_\ell,d_\ell$.
\end{enumerate}
Define $G(\cmorph{v}{w})$ to be the composition
\[
F(v)\times [0,1]^\ell/\bdy\to F(\cmorph{v}{w})\times[0,1]^\ell/\bdy \to F(w)\times[0,1]^\ell/\bdy,
\]
where the $\bdy$ symbols denote the unions of the boundaries of the
cubes, the first map sends a point of the form $\Phi(a,x)$ to
$(a,x)\in F(\cmorph{v}{w})\times[0,1]^\ell$ and all other points to the
collapsed boundary, and the second map sends $(a,x)$ to $(t(a),x)$.

The resulting map $G\co \CCat{n}\to \CW$ does not commute on the nose:
the maps $G(\cmorph{v}{w})\circ G(\cmorph{u}{v})$ and
$G(\cmorph{u}{w})$ are (probably) defined using different choices of
embeddings. However, the spaces of embeddings $\Phi$ are
$(\ell-2)$-connected, so since any sequence of composable morphisms in
$\CCat{n}$ has length at most $n\leq\ell$, the cube commutes up to
coherent homotopies, i.e., is a \emph{homotopy coherent diagram} in
the sense of Vogt~\cite{Vogt-top-hocolim}. Homotopy coherent diagrams
are essentially as good as commutative diagrams: they appear
naturally whenever one takes a commutative diagram of spaces and
replaces all of the objects by homotopy equivalent ones, and
conversely every homotopy coherent diagram can be replaced by a
homotopy equivalent honestly commutative diagram essentially
canonically.

Next, we add a single object $*$ to the category $\CCat{n}$, and a
single morphism $v\to *$ for each non-terminal object $v$ of
$\CCat{n}$, to get a bigger category $\CCat{n}_+$. Extend $G$ to a
functor $G_+\co \CCat{n}_+\to\CW$ by declaring that $G_+(*)$ is a single
point. Finally, take the homotopy colimit of $G_+$ (as defined by Vogt
for homotopy coherent diagrams~\cite{Vogt-top-hocolim}) to define
\[
\CRealize[\ell]{F}=\hocolim(G_+).
\] 
Adding the object $*$ and taking the homotopy colimit is an iterated
version of the mapping cone construction. In particular, if
$G\from\CCat{1}\to\CW$ were a diagram on the one-dimensional cube,
then $G$ would consist of the data of a cellular map between two CW
complexes, $G(\cmorph{1}{0})\from G(1)\to G(0)$, and 
$\hocolim(G_+)$ would be the mapping cone of
$G(\cmorph{1}{0})$.  This iterated mapping cone of $G$ is a
space-level version of the totalization construction from
\Definition{totalization}. So, unsurprisingly, $\CRealize[\ell]{F}$
carries a CW complex structure so that its reduced cellular chain
complex $\cellC(\CRealize[\ell]{F})$ can be identified with
$\Sigma^\ell\Total(F)$, implying
$\wt{H}_\bullet(\CRealize[\ell]{F})\cong \Sigma^{\ell}
H_\bullet(\Total(F))$.

Similarly, given any natural transformation $\eta\from F\to F'$
between diagrams $F,F'\from\CCat{n}\to\BurnsideCat$, and an $\ell\geq
n+1$, one can construct a based cellular map
$\CRealize[\ell]{\eta}\from \CRealize[\ell]{F}\to\CRealize[\ell]{F'}$
so that the induced map on the reduced cellular chain complexes is the
map
$\Sigma^\ell\Total(\eta)\from\Sigma^\ell\Total(F)\to\Sigma^\ell\Total(F')$. This
is not functorial on the nose, since the construction depends on the
choices of embeddings $\Phi$. However, the space of choices of the
coherent homotopies in the construction of the homotopy coherent
diagrams $\CCat{n}\to\CW$ is an $(\ell-n-2)$-connected space, and the
space of choices in the construction of the map is an
$(\ell-n-1)$-connected space. By allowing $\ell$ to be arbitrarily
large, we can make these constructions \emph{essentially canonical},
namely, parametrized by a contractible space.

Therefore, to make the
space independent of $\ell$ and the choices of embeddings, and to make
the construction functorial, we replace the CW complex
$\CRealize[\ell]{F}$ by its suspension spectrum, and then formally
desuspend $\ell$ times:
\[
\Realize{F}=\Sigma^{-\ell}\CWtoSpec{\CRealize[\ell]{F}}.
\]
Alternately, one could replace the diagram $G$ by its suspension
spectrum before taking the homotopy colimit.  Finally for any stable
functor $\Sigma^r F$, define $\Realize{\Sigma^r F}$ to be the formal
suspension $\Sigma^r\Realize{F}$.

There are several other ways of turning a diagram
$F\co\CCat{n}\to\Spectra$ into a space:
\begin{enumerate}[leftmargin=*]
\item Produce a diagram $G\co\CCat{n}\to\Spectra$ by viewing $F$ as
  a functor $\wt{F}$ to the category of permutative categories, by
  letting $\wt{F}(v)=\Sets/F(v)$ be the category of sets over $F(v)$,
  and then applying $K$-theory. Then take the iterated mapping cone,
  as above. This is the procedure used by
  Hu-Kriz-Kriz~\cite{HKK-Kh-htpy}.
\item Turn $F$ into a (rather special) flow category, in the sense of
  Cohen-Jones-Segal, and then apply the Cohen-Jones-Segal
  realization~\cite{CJS-gauge-floerhomotopy}. This is essentially the
  approach taken in our previous work~\cite{RS-khovanov}.
\item\label{item:thickened} Without making any choices, produce a
  diagram $\thicf{F}\co\thic{\CCat{n}}\to\Spectra$ from a slightly
  larger category $\thic{\CCat{n}}$, and then take an appropriate
  iterated mapping cone of $\thicf{F}$~\cite[Section~4]{LLS-khovanov-product}.
\end{enumerate}
These constructions all give homotopy equivalent spectra~\cite[Theorem
3]{LLS-khovanov-product}. 

For convenience, we summarize some of the properties of this functor
\[
\Realize{\cdot}\from\BurnsideCat^{\CCat{n}}\to\Spectra.
\]
\begin{enumerate}[label=(Sp-\arabic*),ref=(Sp-\arabic*)]
\item\label{item:property-objects} For any stable functor
  $(F\from\CCat{n}\to\BurnsideCat,r)$,
  \begin{enumerate}[leftmargin=0ex, label=(\alph*),ref=(\alph*)]
  \item $\Realize{\Sigma^r F}$ is the formal suspension $\Sigma^r
    \Realize{F}$.
  \item\label{item:is-CW} The spectrum $\Realize{F}$ is homotopy
    equivalent to a formal desuspension of the suspension spectrum of
    some finite CW complex $\CRealize[\ell]{F}$.
  \item\label{item:homolo-ident} There is an identification of
    $\Total(F)$ with the cellular chain complex of
    $\Realize{F}$ from
    \ref{item:property-objects}\ref{item:is-CW}. In particular,
    $H_\bullet(\Realize{F})\cong
    H_\bullet(\Total(F))$ and
    $H^\bullet(\Realize{F})\cong
    H^\bullet(\Total(F))$.
  \end{enumerate}
\item\label{item:property-morphisms} For any natural transformation
  $\eta\from F\to F'$ between diagrams
  $F,F'\from\CCat{n}\to\BurnsideCat$, and any integer $r$,
  \begin{enumerate}[leftmargin=0ex, label=(\alph*),ref=(\alph*)]
  \item The map $\Realize{\Sigma^r\eta}$ is the formal suspension
    $\Sigma^r\Realize{\eta}$.
  \item\label{item:map-ident} The map $\Realize{\eta}$ is
    homotopic to a formal desuspension of some cellular map
    $\CRealize[\ell]{\eta}\from
    \CRealize[\ell]{F}\to\CRealize[\ell]{F'}$.
  \item\label{item:induced-map-homol} With respect to the
    identification from
    \ref{item:property-objects}\ref{item:homolo-ident} and
    \ref{item:property-morphisms}\ref{item:map-ident}, the map
    $\Total(\eta)$ can be identified with the map induced by
    $\Realize{\eta}$ on the reduced cellular chain complex.
  \end{enumerate}
\item\label{item:coprod-to-coprod} For any two functors $F,F'\co
  \CCat{n}\to\BurnsideCat$, there is a canonical homotopy equivalence
  $\Realize{F\amalg F'}\simeq\Realize{F}\vee\Realize{F'}$.
\item\label{item:prod-to-prod} For any two functors
  $F_1\from\CCat{n_1}\to\BurnsideCat$ and
  $F_2\from\CCat{n_2}\to\BurnsideCat$, there is a canonical homotopy
  equivalence $\Realize{F_1\times F_2}\simeq
  \Realize{F_1}\wedge\Realize{F_2}$.
\item\label{item:suspend-equiv} If $\iota\from\CCat{n}\into\CCat{N}$ is
  a face inclusion, then there is a canonical homotopy equivalence
  $\Realize{F}\simeq \Realize{\Sigma^{-\card{\iota}}F_\iota}$.
\item\label{item:equiv-equiv} In particular, from \ref{item:suspend-equiv} and
  \ref{item:property-morphisms}\ref{item:induced-map-homol}, stably
  equivalent stable functors give homotopy equivalent spectra.
\end{enumerate}
See~\cite{LLS-khovanov-product}, particularly Section~4, for more
details about the construction of $\Realize{F}$, and for proofs of
these properties.

\section{Some questions}
We end with a few questions. First, the Khovanov homotopy type $\KhSpace^j(K)$
contains more information than the Khovanov homology
$\Kh^{i,j}(K)=\wt{H}^i(\KhSpace^j(K))$. Specifically, the Khovanov spaces
$\KhSpace^j(K)$ induce Steenrod operations on $\Kh^{i,j}(K)$. One can
give a combinatorial formula for $\Sq^2\co \Kh^{i,j}(K;\F_2)\to
\Kh^{i+2,j}(K;\F_2)$~\cite{RS-steenrod}; using this, Seed showed
that there are knots with isomorphic Khovanov homologies but distinct
Steenrod squares~\cite{Seed-Kh-square}. Further, using $\Sq^2(K)$, one
can give refinements $s_{\pm}^{\Sq^2}$~\cite{RS-s-invariant} of
Rasmussen's $s$-invariant~\cite{Ras-kh-slice} and, using direct
computations and the connected sum formula from
Properties~\ref{item:disj-union-Kh} and~\ref{item:prod-to-prod}, one
obtains (modest) new concordance results~\cite[Corollary
1.5]{LLS-khovanov-product}.

The formula for $\Sq^2$ is combinatorial and not hard to formulate in
terms of $F_{\Kh}(K)$, but does not seem particularly obvious in this
language. We do not know how to generalize the formula to higher
Steenrod squares, or (reduced) $p\th$ powers.  So:
\begin{question}
  Are there nice formulations of the action of the Steenrod algebra on
  $\Kh(K)$, purely in terms of the stable Khovanov functor from
  \Definition{khovanov-functor}? Of other algebro-topological
  invariants of $\KhSpace(K)$ (such as the $K$-theory or bordism
  groups)? Do these give additional concordance invariants?
\end{question}

As described in \Section{spaces}, one can turn stable functors
$\CCat{n}\to\BurnsideCat$ into spectra. Perhaps this operation loses
useful information:
\begin{question}
  Are there stable functors $\Sigma^k F$ and $\Sigma^\ell G$ so that
  the spectra $\Realize{\Sigma^k F}$ and $\Realize{\Sigma^\ell G}$ are
  homotopy equivalent but $\Sigma^k F$ and $\Sigma^\ell G$ are not
  stably equivalent? If so, are there useful knot invariants that can
  be obtained from the Khovanov functor which are lost on passing to
  $\KhSpace(K)$?
\end{question}

There are other formulations of Khovanov homology, including
Cautis-Kamnitzer's formulation via algebraic
geometry~\cite{CK-kh-sheaves1}, formulations by
Webster~\cite{Webster-Kh-higher-rep,Webster-kh-tensor},
Lauda-Queffelec-Rose~\cite{LQR-kh-Howe}, and others via categorified
quantum groups, and Seidel-Smith's formulation via Floer
homology~\cite{SS-Kh-symp} (see also~\cite{AS-kh-agree-Q}). In the
Floer homology case, the Cohen-Jones-Segal
program~\cite{CJS-gauge-floerhomotopy} is expected to produce a Floer
spectrum.
\begin{question}
  Does the Cautis-Kamniter or Webster formulation of Khovanov homology
  have a natural extension along the lines of stable functors to the
  Burnside category?
\end{question}

\begin{question}
  Is the Floer spectrum (conjecturally) given by applying the
  Cohen-Jones-Segal construction to the Seidel-Smith formulation of
  Khovanov homology homotopy equivalent to $\KhSpace(K)$?
\end{question}

Finally, it would be nice to have an explicit description of the
stable Khovanov functor or the Khovanov homotopy type in more
cases. One possible direction would be to understand the stable
invariant in the sense of~\cite{Sto-kh-stable, GOR-kh-stable-torus}:
\begin{question}\label{q:stable}
  Do the Khovanov functors for $(m,N)$ torus knots admit a limit as
  $N\to\infty$, and if so, is there a simple description of the limit
  functor or the associated spectrum?
\end{question}

\begin{remark}
  Since the first version of this paper was written, Willis and
  Lobb-Orson-Schuetz have shown that the Khovanov homotopy types of
  torus knots stabilize (cf.\ Question~\ref{q:stable})~\cite{Willis-Kh-stable,LOS-Kh-colored}. In
  fact, they have deduced a more general stabilization phenomenon and
  deduced the existence of colored Khovanov stable homotopy
  types~\cite{LOS-Kh-colored,Willis-Kh-colored}.
\end{remark}

\bibliographystyle{myalpha}
\bibliography{newbibfile}

\end{document}